\documentclass{amsart}
\usepackage{amsmath}
\usepackage{amsthm}

\usepackage[pdftex,bookmarks=false,colorlinks,
linkcolor=black,citecolor=black]{hyperref} 

\newtheorem{theorem}{Theorem}[section]
\newtheorem*{plaintheorem}{Theorem}
\newtheorem{lemma}[theorem]{Lemma}
\newtheorem{proposition}[theorem]{Proposition}

\newtheorem*{problem}{Problem}
\theoremstyle{definition}
\newtheorem{definition}[theorem]{Definition}
\theoremstyle{remark} 
\newtheorem{remark}[theorem]{Remark}
\newtheorem{example}[theorem]{Example}

\newcommand{\bbr}{\mathbb{R}} 
\newcommand{\bbz}{\mathbb{Z}}
\newcommand{\bbq}{\mathbb{Q}}
\newcommand{\bbc}{\mathbb{C}}
\newcommand{\pp}{\mathcal{P}}

\newcommand{\ugh}{R_{\bar{r}}}
\newcommand{\ughs}{R_{\bar{s}}}

\title{Diophantine Correct Open Induction}
\author{Sidney Raffer}
\address{ Phnom Penh, Cambodia} 
\email {sraffer@gmail.com}
\thanks{ I am  grateful to my advisor Attila M\'at\'e, and to
Roman Kossak for his kind assistance over many years. The material
here is based in part on my dissertation \textit{ Diophantine Properties
of Ordered Polynomial Rings, } submitted to the City University of New
York, June 2000.}
\begin{document}
\begin{abstract}We give an induction-free 
axiom system for diophantine correct open induction. We reduce the
problem of whether a finitely generated ring of Puiseux polynomials is
diophantine correct to a problem about the value-distribution of a
tuple of semialgebraic functions with integer arguments. We use this result, and a
theorem of Bergelson and Leibman on generalized polynomials, to
identify a class of diophantine correct subrings of the field of
descending Puiseux series with real coefficients.
\end{abstract}

\maketitle

\section*{Introduction}

\subsection*{Background}
 A model of open induction is a discretely ordered ring whose semi\-ring
 of non-negative elements satisfies the induction axioms for
 open\footnote{A formula is ``open''  if it is quantifier-free.}  formulas.

 Equivalently, a model of open induction is a discretely
 ordered ring $R$, with real closure $F$, such that every element of
 $F$ lies at a finite distance from some element of $R$.\footnote{Consequently,
 the inequality $r\le x<r+1$ defines a
 function $r=\lfloor x \rfloor$ from $F$ onto $R$. This function is the natural counterpart of the usual integer part operator from $\mathbb{R}$ onto $\mathbb{Z}$.}

The surprising equivalence between these two notions was discovered by
Shepherdson \cite{Sh}. This equivalence enabled him to identify
naturally occurring models of open induction made from Puiseux
polynomials. Let $F$ be the field of descending\footnote{A descending Puiseux series with real
coefficients has the form $\sum_{i<M}a_it^{i/D}$, where $M$ is an
integer, $D$ is a positive integer, and the $a_i$ are real. } Puiseux series with
coefficients in some fixed real closed subfield of $\bbr$.  Puiseux's
theorem implies that $F$ is real closed. There is a unique ordering on
$F$, in which the positive elements are the series with positive leading
coefficients. Define an ``integer part'' function on $F$
as follows:
$$\left\lfloor \sum_{i<M} a_it^{i/D}  \right\rfloor= \lfloor
 a_0\rfloor +  \sum_{i>0} a_it^{i/D}$$
where $\lfloor a_0\rfloor$ is the usual integer part of the
 real number $a_0$.

The image of $\lfloor\,\cdot\,\rfloor$ is the subring $R$ of $F$
 consisting of all Puiseux polynomials with constant terms in
 $\bbz$. Since every Puiseux series is a finite distance from its
 leading Puiseux polynomial, it is immediate that every element of $F$
 is a finite distance from some element of $R$. The discreteness of
 the ordering on $R$ is a consequence of the 
 polynomials in $R$ having integer constant terms. By Shepherdson's
 equivalence, $R$ is a model of open induction.

There has been some effort to find other models of open induction in
the field of real Puiseux series $F$, satisfying additional properties of
the ordered ring of integers.  Perhaps the most extreme possibility in
this regard is that $F$ contains a model of open induction that is
diophantine correct. We shall say that an ordered ring is {\it
diophantine correct} if it satisfies every universal sentence true in
the ordered ring of integers.  We refer to the theory of diophantine
correct models of open induction as $DOI$. To make this notion
precise, we shall assume that ordered rings have signature
$(\,+\,-\,\cdot\,\le\,0\,\,1\,)$.  All formulas will assumed to be of
this type.  Diophantine correctness amounts to the requirement that an
ordered ring not satisfy any system of polynomial equations and
inequalities that has no solution in the ring of integers.

Shepherdson's models are not diophantine correct.\footnote{For
example, there are positive solutions of the equation $x^2=2y^2$ {\it via}
the Puiseux polynomials $x=\sqrt{2}t$ and $y=t$.}  However, there are
other models of open induction in the field of real Puiseux series,
notably the rings constructed by Berarducci and Otero \cite{Be}, which
are not obviously not diophantine correct. More generally, it seems to be
unknown whether the field of real Puiseux series has a diophantine
correct integer part.  

\begin{problem} Let $F$ be the field of Puiseux series with
coefficients in a real closed subfield $E$ of $\bbr$ of positive 
transcendence degree over the rationals. Must {\rm(}Can{\rm)} $F$
contain a model of $DOI$ other than $\bbz$?
\end{problem}

We prove in Section 2 that the field $E$ must
have positive transcendence degree, otherwise the only model of $DOI$
contained in $F$ is $\bbz$.
\subsection*{ Wilkie's Theorems and the Models of
Berarducci and Otero}
 Wilkie \cite{Wi} gave  necessary and sufficient
 conditions for an ordinary (unordered) ring $R$ to have an expansion
 to an ordered ring that extends to a model of open induction. These
 conditions are

\begin{enumerate}
\item For each prime $p$, there must
be a homomorphism $h_p:R\to \bbz_p$, where $\bbz_p$ is the ring of
$p$-adic integers.\footnote{This is equivalent to the condition that
for every positive integer $n$ and every prime $p$ there is a
homomorphism from $R$ onto the ring $\bbz/p^n\bbz$.}
\item It must be possible to discretely order the ring $R$.
\end{enumerate}
These conditions are independent. For example, the ring
$R=Z[t,(t^2+1)/3]$ is discretely ordered by making $t$ infinite.\footnote{To prove discreteness, first show that $R/3R$ is a
nine-element field. If $H$ is a polynomial with integer coefficients
and if $r=H(t,(t^2+1)/3)$ is finite but not an integer, then $r$ has the
form $a/3^n$, where $3\not|\,a$ and $n>0$. Map the equation
$3^nH(t,(t^2+1)/3)=a$ to $R/3R$ to get a contradiction.} But the equation
$1+x^2=3y$ is solvable in $R$ but not in $\bbz_3$, so there is no
homomorphism from $R$ into $\bbz_3$.

 Conversely,  let $g(t)$ be the polynomial
$(t^2-13)(t^2-17)(t^2-221)$. The ring $R=\bbz[t,t+1/(1+g(t)^2)]$
can be mapped homomorphically to $\bbz_p$ for every $p$,\footnote{To find a homomorphism $h$ from $R$ into
$\bbz_p$, use the fact that the polynomial $g(x)$ has $p$-adic zeros
for all $p$. See \cite{Bo}. Set $h_p(x)=r$, where $r$ is a $p$-adic zero of
$g$, and set $h_p(x+1/(1+g^2))=r+1$, and show that $h_p$ extends to a homomorphism from $R$
into $\bbz_p$.  } but cannot be
discretely ordered: The second generator minus the first is between
two integers if $t$ is not.

Wilkie \cite{Wi} gave conditions under which an ordered ring can be extended so as to
preserve these two conditions  (using a single ordering.) We
paraphrase his results.
\begin{plaintheorem}[\textrm {Wilkie's Extension Theorem}]
Let $R$ be discretely ordered ring. Suppose that for every prime $p$
there is a homomorphism $h_p:R\rightarrow \bbz_p$. Let $F$ be a
real closed field containing $R$ and let $s\in F$. Then
\begin{enumerate}
\item  If $s$ is not a finite distance from any element of $R[\bbq]$, and $s$ is not
infinitely close to any element of the real closure of $R$ in $F$, then
$R[s]$ is discretely ordered as a subring of $F$, and the homomorphisms $h_p$ can be
extended to $R[s]$ by assigning $p$-adic values to $s$ arbitrarily.
\item If $s\in R[\bbq]$, then
choose $n\in \bbz$ so that $ns\in R$. Choose $m\in \bbz$ so that $n$ divides $h_p(ns)-m$ in
$\bbz_p$, for every prime $p$. Put $r=(ns-m)/n$. Then $R[r]$ is
discretely ordered, and the homomorphisms $h_p$ extend to $R[r]$ {\it via} $h_p(r)=(h_p(ns)-m)/n$.
\end{enumerate}
\end{plaintheorem}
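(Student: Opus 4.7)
The plan is to treat the two parts separately. In Part 1, the hypothesis that $s$ is not infinitely close to any element of the real closure $\overline{R}$ of $R$ in $F$ forces $s \notin \overline{R}$, so $s$ is transcendental over $R$. Consequently $R[s]$ is abstractly the polynomial ring $R[X]$, and any assignment $h_p(s) \in \bbz_p$ extends $h_p$ uniquely to $R[s]$. The $p$-adic part of Part 1 is therefore formal, and the substantive content is to show that $R[s]$ is discretely ordered as a subring of $F$.

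For that discreteness, I argue by contradiction: assume $P \in R[X]$ has $\deg P = n \geq 1$ and $P(s) \in (0,1)$ (the case $n=0$ follows from discreteness of $R$). Since $\overline{R}$ is real closed and $|P(x)| \to \infty$ as $|x|\to\infty$, the set $\{x\in\overline{R}:0<P(x)<1\}$ is a bounded union of open intervals whose endpoints are roots of $P(X)$ or $P(X)-1$ in $\overline{R}$, so $s \in (\gamma_1,\gamma_2)$ for some $\gamma_1,\gamma_2\in\overline{R}$. The hypothesis forces both $s-\gamma_1$ and $\gamma_2-s$ to be non-infinitesimal, whence $\gamma_2-\gamma_1>1/N$ for some standard positive integer $N$. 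The Chebyshev coefficient inequality---a first-order statement about polynomials of bounded sup-norm on an interval, and therefore valid in any real closed field by Tarski's transfer principle---implies that every coefficient of $P$ is bounded in absolute value by a standard rational depending only on $n$ and $N$. Since the finite elements of a discretely ordered ring are integers, $P \in \bbz[X]$. Then $\gamma_1,\gamma_2$ are real algebraic numbers, hence bounded by standard rationals in $F$. So $s \in (\gamma_1,\gamma_2)$ is finite, contradicting $|s|=|s-0|$ being infinite (using $0 \in R[\bbq]$).

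In Part 2, the existence of $m$ is an elementary Chinese-remainder argument: only primes $p\mid n$ impose nontrivial constraints ($n$ being a unit in $\bbz_p$ for $p\nmid n$), and these are compatible across the prime-power factorization of $n$. The extension $h_p(r) := (h_p(q)-m)/n$ lies in $\bbz_p$ by construction, and is consistent with the single relation $nr=q-m$ that binds $r$ to $R$. For discreteness of $R[r]$, suppose $P \in R[X]$ of degree $k$ satisfies $P(r)\in(0,1)$. Multiplying by $n^k$ and using $(nr)^i = (q-m)^i \in R$ yields $n^k P(r) \in R \cap (0,n^k)$, which by discreteness of $R$ equals an integer $j$ with $1\leq j \leq n^k - 1$. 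Applying $h_p$ and exploiting the divisibility $n \mid h_p(q)-m$ in $\bbz_p$, each summand of the expansion of $n^kP(r)$ becomes divisible by $n^k$ in $\bbz_p$, so $n^k\mid j$ in $\bbz_p$ for every $p \mid n$; combining over primes forces $n^k \mid j$ in $\bbz$, contradicting $j<n^k$.

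The main obstacle I anticipate is the Chebyshev step in Part 1: one must phrase the coefficient bound as a first-order sentence in the language of ordered rings so that transfer delivers it in $\overline{R}$, since the usual analytic proofs of such inequalities do not immediately apply to an arbitrary real closed field. The rest of the argument is a sequence of routine manipulations with divisibility, semialgebraic geometry, and the standard part of finite elements in discretely ordered rings.
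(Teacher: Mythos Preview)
The paper does not prove this theorem: it is quoted from Wilkie~\cite{Wi}, and the only argument the paper supplies is the paragraph after the statement explaining, via the Chinese remainder theorem, why an integer $m$ with $n\mid h_p(ns)-m$ for all $p$ can always be found. So there is no ``paper's own proof'' to compare against beyond that remark, which your Part~2 discussion reproduces correctly.

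Your Part~2 argument is sound. Writing $q=ns\in R$, the identity $n^kP(r)=\sum_i a_i\,n^{k-i}(q-m)^i\in R$ together with discreteness of $R$ forces $n^kP(r)=j\in\{1,\dots,n^k-1\}$, and applying each $h_p$ shows $v_p(j)\ge k\,v_p(n)$ for every prime $p$, hence $n^k\mid j$ in $\bbz$, a contradiction. This is exactly the kind of argument one finds in Wilkie's paper.

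Your Part~1, however, has a real gap at the Chebyshev step. You claim that if $|P|\le 1$ on an interval of length exceeding $1/N$ then \emph{every} coefficient of $P$ is bounded by a standard rational depending only on $n$ and $N$. That is false even over $\bbr$: take $P(X)=\bigl((X-a)/L\bigr)^n$ on $[a,a+L]$ with $L>1/N$; then $\sup|P|\le 1$ but the constant term $(a/L)^n$ is unbounded as $a\to\infty$. What Chebyshev's inequality actually gives is a bound on the \emph{leading} coefficient in terms of $n$ and the interval length; the lower-order coefficients depend on the position of the interval as well. In your setting $s$ is infinite (it is not a finite distance from $0\in R[\bbq]$), so the interval $(\gamma_1,\gamma_2)$ sits infinitely far out, and there is no reason the non-leading coefficients of $P$ should be finite. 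Consequently the chain ``coefficients finite $\Rightarrow P\in\bbz[X]\Rightarrow\gamma_i$ real algebraic $\Rightarrow s$ finite'' collapses at the first link.

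A repair is possible but requires more than a single transfer. One route: Chebyshev does legitimately force the leading coefficient $a_n$ into $\bbz$, and one can then argue by induction on $n$, using at the base case $n=1$ that $0<as+b<1$ with $a\in\bbz\setminus\{0\}$ puts $s$ within $1/|a|$ of $-b/a\in R[\bbq]$, contradicting the first hypothesis on $s$. Carrying the induction through uses both hypotheses on $s$ in an essential way, not just the second one; your sketch invokes the first hypothesis only at the very end, which is a sign that the earlier steps are doing less work than you think.
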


The choice of $m$ in Case (2) is always possible because $n$ will be a
unit in $\bbz_p$ for all $p$ prime to $n$. Suppose $n$ has prime
decomposition 
$\prod p_i^{e_i}$. For each of the
 prime divisors $p_i$ of $n$,  choose $m_i\in \bbz$ so close\footnote{In the
 sense of the $p$-adic metric.} to $h_{p_i}(ns)$
 that $m_i\equiv h_{p_i}(ns)\mod
 p_i^{e_i}$. Then use the Chinese remainder theorem to get
 $m\equiv m_i\mod p^{e_i} $.

The point is that starting with an ordered ring $R$ and homomorphisms
$h_p$ as above, one can extend $R$ to a model of open induction by
repeatedly adjoining missing integer parts of elements of a real
closure of $R$. We give an example of how this is done. Let
$R=\bbz[t]$, and let $F$ be the field of real Puiseux series. Let
$h_p:R\to\bbz_p$ be the homomorphism given by the rule\footnote{$h_p$
is the unique homomorphism from $R$ into $\bbz_p$ taking $t$ to
$1/(1-p)=1+p+p^2+\ldots$.}
$$h_p(f(t))=f\left(1+p+p^2+\ldots\right).$$ Think of the polynomial $s=t^2/36$
as an element of some fixed real closure of $R$. Then $s$ has no integer
part in $R$.  We shall adjoin an integer part {\it via} Case
(2). Since $36s\in R$, we must find $m\in\bbz$ so close to
$h_p(36s)=1+2p+3p^2+\ldots$ that 36 will divide $h_p(36s)-m$. This is
only an issue for $p=2,3$, since otherwise 36 is a unit. It is enough
to solve the congruences
\begin{align*} m&\equiv 1+2\cdot2^1\mod 2^2\\m&\equiv 1+2\cdot 3^1\mod 3^2.\end{align*}
Here $m=25$ does the job. Thus we adjoin $(36s-25)/36=(t^2-25)/36$ to $R$.

To continue, the element $\sqrt{2}t$ is not within a finite distance
of any element of the ring $R_1=\bbz[t,(t^2-25)/36]$. We can fix that
{\it via} Case (1) by adjoining $\sqrt{2}t+r$, where $r$ is any
transcendental real number.  The fact that $r$ is transcendental
insures that $\sqrt{2}t+r$ is not infinitely close to any element of
the real closure of $R_1$. We can extend the maps $h_p$ to $R_1$ by
assigning  $p$-adic values to $\sqrt{2}t+r$ arbitrarily.

The models of open induction in \cite{Be} are constructed, with some careful
bookkeeping, by iterating the procedure just described.  Up to
isomorphism, the result is a polynomial ring $R$ over $\bbz$ in infinitely many
variables that becomes a model of open induction by  adjoining elements
$r/n$ $(r\in R, n\in\bbz)$ in accordance with Case (2) of Wilkie's
extension theorem.  We  suspect that all of these rings are
diophantine correct. As we shall see, the question turns on how subtle
are the polynomial identities  that can hold on the integer points of a
certain class of semialgebraic sets.

The plan of the paper is as follows. In Section 1 we give a simplified
axiom system for $DOI$. In Section 2 we give number-theoretic
conditions for a finitely generated ring of Puiseux polynomials to be
diophantine correct: We show how the diophantine correctness of 
such a ring is a problem about the distributions of the values at
integer points of certain tuples of generalized polynomials.\footnote{A generalized polynomial is an expression made from
arbitrary compositions of real polynomials with the integer part
operator. See \cite{Bl}.} In Section 3 we give some recent results on
generalized polynomials, and in Section 4 we use these results to give a class of
ordered rings of Puiseux polynomials for which consistency with the
axioms of open induction and diophantine correctness are equivalent.

\section{ Axioms for $DOI$}

In this section we prove that $DOI$ is
equivalent to all true (in $\bbz$) sentences $\forall \bar{x}\exists
y\phi$, with $\phi$ an open formula. The underlying reason for this
fact is that compositions of the integer part operator with
semialgebraic functions suffice to witness the existential quantifier
in every true $\forall \bar{x}\exists y$ sentence.

\begin{theorem}  $DOI$ is axiomatized by the set of all
sentences true in the ordered ring of integers of the form $\forall x_1\forall
x_2\ldots \forall x_n\exists y\phi$, with $\phi $ an open formula.
\end{theorem}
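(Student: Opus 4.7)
The theorem asserts that the proposed axiom set $A$ (of $\forall\bar x\exists y\,\phi$ sentences true in $\bbz$, with $\phi$ open) and $DOI$ have the same consequences. One direction is bookkeeping: the axioms for a discretely ordered ring and every universal sentence true in $\bbz$ are trivially of the proposed $\forall\exists$ form (vacuous existential), while open induction for $\phi(\bar u,x)$ is logically equivalent to the $\forall\exists$ sentence
\[\forall\bar u\,\forall x\,\exists y\,\bigl[\phi(\bar u,x)\vee\neg\phi(\bar u,0)\vee\bigl(\phi(\bar u,y)\wedge\neg\phi(\bar u,y+1)\bigr)\bigr],\]
which is true in $\bbz$. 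Hence every axiom of $DOI$ lies in $A$. The content of the theorem is the converse: every model $R$ of $DOI$ satisfies every true $\forall\bar x\exists y\,\phi$.

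My plan for this converse is to exhibit the witness $y$ as an integer part of a semialgebraic function of $\bar x$. Fix a true sentence $\forall\bar x\exists y\,\phi(\bar x,y)$ and apply cylindrical cell decomposition in $RCF$ to $\{(\bar x,y):\phi(\bar x,y)\}\subseteq\bbr^{n+1}$. This produces a finite partition of $\bbr^n$ into cells $C_1,\dots,C_N$ together with, on each $C_j$, continuous semialgebraic functions $g^{j}_1(\bar x)<\dots<g^{j}_{k_j}(\bar x)$, definable by quantifier-free formulas over $\bbq$ (by RCF quantifier elimination), whose graphs and strips exhaust $\{y:\phi(\bar x,y)\}$. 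For $\bar x\in\bbz^n\cap C_j$, any integer witness $y$ either equals some $g^{j}_i(\bar x)\in\bbz$ — hence $y=\lfloor g^{j}_i(\bar x)\rfloor$ — or lies in a strip $(g^{j}_i(\bar x),g^{j}_{i+1}(\bar x))$, in which case $\lfloor g^{j}_i(\bar x)\rfloor+1$ is itself an integer in that strip and so also a witness; strips unbounded below contribute $\lfloor g^{j}_i(\bar x)\rfloor$ or $\lfloor g^{j}_i(\bar x)\rfloor-1$. Thus every $\bar x\in\bbz^n$ admits a witness of the form $\lfloor g^{j}_i(\bar x)\rfloor+c$ with $c\in\{-1,0,1\}$.

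I would then compress this into a single universal sentence in the pure ordered-ring language. By RCF quantifier elimination, the relation $w=\lfloor g^{j}_i(\bar x)\rfloor$ — that is, $\exists z\,[\theta^{j}_i(\bar x,z)\wedge w\le z<w+1]$, where $\theta^{j}_i$ defines $g^{j}_i$ — is equivalent to a quantifier-free formula $\tilde\theta^{j}_i(\bar x,w)$. Consider
\[\Phi\;:=\;\forall\bar x\,\forall\bar w\,\Bigl[\bigwedge_{j,i}\bigl(\bar x\notin C_j\vee\tilde\theta^{j}_i(\bar x,w_{j,i})\bigr)\to\bigvee_{j,i,c}\bigl(\bar x\in C_j\wedge\phi(\bar x,w_{j,i}+c)\bigr)\Bigr].\]
This is a universal sentence in the ordered-ring language. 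Since its matrix is quantifier-free, it is preserved under ring embeddings, so the previous paragraph — which pins $w_{j,i}$ to $\lfloor g^{j}_i(\bar x)\rfloor$ on each $C_j$ — shows $\bbz\models\Phi$. Diophantine correctness of $R$ therefore gives $R\models\Phi$. Finally, for $\bar a\in R^n$ the cell decomposition partitions the real closure $F$ of $R$, so $\bar a$ lies in some $C_{j_0}(F)$; open induction supplies $w_{j_0,i}:=\lfloor g^{j_0}_i(\bar a)\rfloor\in R$, and $w_{j,i}$ for $j\ne j_0$ can be chosen arbitrarily since the hypothesis is then vacuous. The antecedent of $\Phi$ holds at $(\bar a,\bar w)$, so the consequent yields the required $y=w_{j_0,i}+c\in R$ with $\phi(\bar a,y)$.

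The main obstacle is the semialgebraic Skolem step of the second paragraph: I must show that a \emph{finite} list of integer parts of semialgebraic functions, uniform over $\bar x\in\bbz^n$, suffices to enumerate all integer witnesses, and that each ``$w=\lfloor g\rfloor$'' predicate admits a quantifier-free ordered-ring rewriting whose interpretation in $\bbz$ agrees with the usual integer part of the extended real-valued function. Once this is secured, the compression into a single universal sentence and the appeal to diophantine correctness plus integer parts from open induction are then routine.
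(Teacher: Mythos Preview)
Your argument is correct and rests on the same mechanism as the paper's: decompose $\{y:\phi(\bar x,y)\}$ into intervals with $\bbq$-definable semialgebraic endpoints, observe that integer parts of those endpoints furnish the witnesses, and encode this into a universal sentence to be transferred by diophantine correctness. The paper packages the key step as a standalone lemma (its Lemma~1.3): over $OI$, every open $\exists y\,\phi(\bar x,y)$ is provably equivalent to a finite disjunction $\bigvee_i\forall z_i\,\psi_i(\bar x,z_i)$ of \emph{universal} open formulas, so that $\forall\bar x\,\exists y\,\phi$ is outright $OI$-equivalent to a universal sentence and the transfer becomes a one-liner. Your route instead carries the cell functions $g^j_i$ and the auxiliary integer-part variables $w_{j,i}$ explicitly through the argument; this is a touch heavier but has the virtue of making the Skolem witnesses $\lfloor g^j_i(\bar x)\rfloor+c$ visible --- exactly what the paper's opening remark about compositions of the integer-part operator with semialgebraic functions is alluding to. One small edge case to patch in your write-up: cells $C_j$ on which the fiber $\{y:\phi(\bar x,y)\}$ is all of $\bbr$ (so $k_j=0$ and there is no $g^j_i$ to take an integer part of); adding a default disjunct $\phi(\bar x,0)$ to the consequent of $\Phi$ handles this.
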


The proof requires two lemmas. The first is a parametric
version of the fact that definable subsets in real closed fields are
finite unions of intervals. 

Let $F$ be a real closed field and $\phi (x,\bar{y})$ a formula. For
each $\bar{r}\in F,$ the subset of $F$ defined by $\phi (x,\bar{r})$
can be expressed as a finite union $I_{1,\bar{r}}\cup \ldots \cup
I_{n,\bar{r}},$ where the $I_{i,\bar{r}}$ are either singletons or
open intervals with endpoints in $ F\cup \{\pm \infty \}$.  We shall
require the fact that for each $\phi$ there are formulas
$\gamma_i(x,\bar{y})$ such that for every $\bar{r}$, the $\gamma_i(x,\bar{r})$ define such
intervals $I_{i,\bar{r}}$.

\begin{lemma} 
Let $\phi (x,\bar{y})$ be a formula in the language of ordered rings. Then
there is a finite list of open formulas $\gamma _i(x,\bar{y})$ such
that the theory of real closed fields proves the following sentences: 

\begin{enumerate}
\item 
 $ \forall x,\bar{y}\,(\phi (x,\bar{y})\leftrightarrow%
\bigvee\nolimits_i\gamma _i(x,\bar{y}))$\smallskip
\item 
$\bigwedge\nolimits_i\forall \bar{y}\,\,\,(
(\neg \exists x\,\gamma _i(x,\bar{y}))\,\,\,\vee $

\qquad $(\exists !x\,\gamma _i(x,\bar{y} ))\,\,\,\,\vee$

\qquad  $(\exists z\forall x(\gamma _i(x,\bar{y})\leftrightarrow
x<z))\,\,\,\vee $

\qquad$(\exists z\forall x(\gamma _i(x,\bar{y})\leftrightarrow
x>z))\,\,\,\vee$

\qquad$(\exists z,w\,\forall x\,(\gamma _i(x,\bar{y} 
)\leftrightarrow z<x<w)))$
\end{enumerate}
\end{lemma}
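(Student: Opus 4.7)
The plan is to reduce to the quantifier-free case using Tarski's quantifier elimination for real closed fields, and then to produce the $\gamma_i$ uniformly via a Thom-style sign-condition decomposition. First I would apply quantifier elimination to replace $\phi(x,\bar{y})$ by an equivalent quantifier-free formula, which after passage to disjunctive normal form is a Boolean combination of sign conditions on finitely many polynomials $p_1(x,\bar{y}),\ldots,p_m(x,\bar{y})$. Let $\pp^*$ be the closure of $\{p_1,\ldots,p_m,x\}$ under the partial derivative $\partial/\partial x$; this is still a finite set of polynomials in $x,\bar{y}$, since each differentiation strictly decreases the $x$-degree. For every sign assignment $\sigma:\pp^*\to\{-,0,+\}$, let $\gamma_\sigma(x,\bar{y})$ be the open formula asserting that $\mathrm{sign}(q(x,\bar{y}))=\sigma(q)$ for every $q\in\pp^*$; these are my candidates for the $\gamma_i$.

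The key step is a parametric form of Thom's lemma: because $\pp^*$ is closed under $\partial/\partial x$, for each fixed $\bar{y}$ the set $\{x:\gamma_\sigma(x,\bar{y})\}$ is either empty, a singleton, or an open interval with endpoints in $F\cup\{\pm\infty\}$. I would prove this by induction on the maximum $x$-degree appearing in $\pp^*$: pick $p\in\pp^*$ of maximal $x$-degree, note that $\partial p/\partial x\in\pp^*$ has strictly smaller $x$-degree, so by induction the sign condition on $\pp^*\setminus\{p\}$ already cuts out an interval, singleton, or empty set; on that set, $p$ is monotone (or constant) because its derivative has constant sign, so adjoining the condition $\mathrm{sign}(p(x,\bar{y}))=\sigma(p)$ intersects it with an open subinterval, a single point, or the empty set. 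Including $x$ itself in $\pp^*$ rules out the degenerate case of $\gamma_\sigma$ defining all of $F$, since once $\mathrm{sign}(x)$ is prescribed the set lies in $(-\infty,0)$, $\{0\}$, or $(0,\infty)$. The argument uses only the intermediate value theorem and the implication ``derivative of constant sign implies monotone,'' both of which are theorems of real closed fields, so the conclusion is provable in that theory.

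To finish, observe that the truth value of the quantifier-free form of $\phi(x,\bar{y})$ depends only on the signs of $p_1(x,\bar{y}),\ldots,p_m(x,\bar{y})$, and hence only on $\sigma$. Letting $\Sigma$ be the set of $\sigma$ for which $\phi$ is thereby forced to be true, clause (1) of the lemma follows by taking $\{\gamma_i\}=\{\gamma_\sigma:\sigma\in\Sigma\}$, and clause (2) is exactly the parametric Thom-lemma conclusion established above. The main obstacle is the induction establishing the parametric Thom's lemma---in particular, identifying the right uniform family $\pp^*$ that makes sign conditions define intervals---but once that is in hand the remaining verifications are bookkeeping.
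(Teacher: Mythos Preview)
Your proposal is correct and follows exactly the route the paper indicates: the paper's proof consists of the single sentence ``This is a well-known consequence of Thom's Lemma'' with a citation to Van den Dries, and what you have written is precisely the standard Thom sign-condition argument that this citation points to. Your device of adjoining $x$ to $\pp^*$ to preclude the full-line case is a clean way to match the exact list of shapes in clause~(2), and the rest is the usual induction on $x$-degree.
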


Formula (1) asserts that for any  tuple
$\bar{r}$ in a real closed field, the set defined by $\phi(x,\bar{r})$ is the union of the
sets defined by the $\gamma_i(x,\bar{r})$. Formula (2) asserts that
each set defined by $\gamma_i(x,\bar{r})$ is either empty, or a
singleton, or an open interval. 
\begin{proof} This is a well-known consequence of Thom's Lemma. See \cite{Vd}. 
\end{proof}

The next Lemma  shows that in models of
$OI$, a one-quantifier universal formula is equivalent to an
existential formula.

\begin{lemma}
For every formula $\forall x\,\phi (x,\overline{y})$ with $\phi $
open, there are open formulas $\psi _i(x_i,\overline{y})$ such that 
$$
OI\vdash \forall \overline{y}\,((\forall x\phi (x,\overline{y}%
))\leftrightarrow \bigwedge\nolimits_i\exists x_i\psi _i(x_i,\overline{y})).
$$
\end{lemma}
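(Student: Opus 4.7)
The plan is to apply Lemma 1.2 to the open formula $\neg\phi(x,\bar{y})$, obtaining open formulas $\gamma_1(x,\bar{y}),\ldots,\gamma_n(x,\bar{y})$ such that, over the theory of real closed fields, $\neg\phi(x,\bar{y})\leftrightarrow\bigvee_i\gamma_i(x,\bar{y})$, and each $\gamma_i(x,\bar{y})$ defines one of: the empty set, a singleton, a half-line, or a bounded open interval. Since open formulas are absolute between an $OI$-model $R$ and its real closure $F$, these equivalences descend to $R$-tuples, giving
$$OI\vdash\forall\bar{y}\,\Bigl((\forall x\,\phi(x,\bar{y}))\leftrightarrow\bigwedge\nolimits_i\neg\exists x\,\gamma_i(x,\bar{y})\Bigr).$$
It is therefore enough to find, for each $i$, an open formula $\psi_i(x_i,\bar{y})$ such that $\neg\exists x\,\gamma_i(x,\bar{y})$ is $OI$-equivalent to $\exists x_i\,\psi_i(x_i,\bar{y})$.

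My candidate is as follows: using quantifier elimination for real closed fields, let $\psi_i(x_i,\bar{y})$ be an open formula equivalent (over the theory of real closed fields) to $\forall x\,(\gamma_i(x,\bar{y})\to x_i<x<x_i+1)$. The direction $(\Leftarrow)$ is immediate: if some $r\in R$ satisfies $\psi_i(r,\bar{y})$, then in $F$ every solution of $\gamma_i(\cdot,\bar{y})$ lies strictly between the consecutive $R$-elements $r$ and $r+1$, so no element of $R$ can satisfy $\gamma_i(\cdot,\bar{y})$.

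For the direction $(\Rightarrow)$, assume no $r\in R$ satisfies $\gamma_i(\cdot,\bar{y})$ and split according to which of the five disjuncts of Lemma 1.2(2) holds. If $\gamma_i(\cdot,\bar{y})$ defines the empty set, any $r\in R$ vacuously witnesses $\psi_i$. If it defines a singleton $\{c\}$, then by hypothesis $c\notin R$, and the integer part $\lfloor c\rfloor\in R$ (provided by open induction) satisfies $\lfloor c\rfloor<c<\lfloor c\rfloor+1$, witnessing $\psi_i$. The two half-line cases are ruled out by the standing hypothesis, since in any $OI$-model the element $\lfloor z\rfloor-1$ solves $x<z$ and the element $\lfloor z\rfloor+1$ solves $x>z$. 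In the bounded interval case $(z,w)$, take $r=\lfloor z\rfloor$; then $r\leq z$, and if $r+1<w$ held, then $r+1\in R\cap(z,w)$ would contradict the hypothesis, so $(z,w)\subseteq(r,r+1)$ and $\psi_i(r,\bar{y})$ holds.

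The main obstacle is not any single step but confirming that the entire argument is provable inside $OI$ rather than merely semantically true: the equivalences over the theory of real closed fields must be transferred into $OI$ via the absoluteness of open formulas, and the case analysis on Lemma 1.2(2) must be packaged as a single finite disjunction whose branches each appeal explicitly to the integer part operator furnished by open induction.
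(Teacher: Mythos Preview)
Your proposal is correct and follows essentially the same route as the paper. Your formula $\psi_i(x_i,\bar y)$, asserting that every solution of $\gamma_i(\cdot,\bar y)$ lies in the open interval $(x_i,x_i+1)$, is exactly the paper's $\alpha_i(x_i,\bar y)\wedge\beta_i(x_i+1,\bar y)$; the only cosmetic difference is that for the forward direction the paper invokes the least number principle to produce the greatest $g\in R$ below the interval, whereas you take the integer part of the left endpoint directly and run through the five cases of Lemma~1.2 explicitly---in $OI$ these two moves are interchangeable.
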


The idea of the proof is as follows: If the formula $\forall x\phi
(x,\overline{r})$ holds in some model $R$ of open induction, with
$\bar{r}\in R$, then the formula $\phi (x,\overline{r})$ must hold for
all elements $x$ of the real closure of $R$, except for finitely many
intervals $U_i$ of length at most 1. The existential formula $\exists
x_i\psi _i(x_i,\overline{y})$ says that for some $e_i\in R$, the set
$U_i$ is included in the open interval $(e_i,e_i+1)$.

\begin{proof}[Proof of Lemma 1.3]
Let $\gamma_i$ be the formulas given by the statement of
Lemma 1.2, using $\neg\phi$ in place of $\phi$. Thus Formula (1) of Lemma
1.2 now reads
\begin{equation*} \forall x,\bar{y}\,(\neg\phi (x,\bar{y})\leftrightarrow%
\bigvee\nolimits_i\gamma _i(x,\bar{y})).\tag{$*$}
\end{equation*}

By Tarski's Theorem, choose quantifier free formulas $\alpha _i(z,\bar{y})$
 and $\beta _i(z,\bar{y})$ such that the theory of real closed fields proves 
$$
 \forall z,\bar{y}\,(\alpha _i(z,\bar{y})\leftrightarrow \forall
 w\,(\gamma _i(w,\bar{y})\rightarrow z<w))  
$$
and 
$$ \forall z,\bar{y}\,(\beta _i(z,\bar{y})\leftrightarrow \forall
 w\,(\gamma _i(w,\bar{y})\rightarrow w<z)). 
$$

If $F$ is a real closed
 field, and if $\bar{r}\in F,$ then $\alpha
 _i(x_i,\bar{r}) $ defines all elements $x_i$ of $F$ such that $x_i$
 is less than any
 element of the set defined by $\gamma _i(x,\bar{r})$. Similarly, $\beta
 _i(x_i,\bar{r}) $ defines all elements $x_i$ of $F$ such that $x_i$
 is greater than any
 element of the set defined by $\gamma _i(x,\bar{r})$. 

 Define the formula $\psi _i(x_i,\overline{y})$ required by the conclusion of
 the Lemma to  be 
$$
 \alpha _i(x_i,\bar{y})\wedge \beta _i(x_i+1,\bar{y}). 
$$
We must prove that the equivalence 
$$
\forall \bar{y}\,((\forall x\phi (x,\bar{y}))\leftrightarrow
\bigwedge\nolimits_i\exists x_i\psi _i(x_i,\bar{y})) 
$$
holds in every model of open induction $R$.

For the left-to-right direction, let $\bar{r}$ be a
tuple from $R$, and suppose that $R$ satisfies $\forall
x\,\phi (x,\bar{r})$. For each $i$ we must find  $g$ in $R$
such that 
\begin{equation*}
R\models \alpha _i(g,\bar{r})\wedge \beta _i(g+1,\bar{r}).\tag{$**$}
\end{equation*}

Let $F$ be a real closure of $R$. Let $I_{i,\bar{r}}$ be the open interval of $F$ defined by
the formula $\gamma _i(x,\bar{r}).$ 

The interval $I_{i,\bar{r}}$ cannot be unbounded: It must have both
 endpoints in $F$. Otherwise $I_{i,\bar{r}}$ would meet $R$.\footnote{If
 $R$ is an ordered ring and $F$ is a real closure of $R$, then $R$ is
 cofinal in $F$. \cite{Br}.} If $I_{i,\bar{r}}$
 did meet $R$, then the
 universal sentence ($*$), would give an element $e\in R$ for which
 $\neg\phi(e,\bar{r})$ holds, contrary to our assumption that $R\models \forall x\,\phi
 (x,\bar{r})$.  Therefore $I_{i,\bar{r}}$ is a
bounded open interval.

 If the interval $I_{i,\bar{r}}$ is empty, then  every $g\in R$ will trivially satisfy  condition
$(**)$, and the proof will be complete. Therefore, we can assume that $I_{i,\bar{r}}$ is nonempty. 
Formula ($*$) then implies that the half-open intervals defined by the formulas $\alpha _i(x_i,\bar{r})$
and
$\beta _i(x_i+1,\bar{y})$  will each have  an endpoint in $F$, i.e., they
will not be of the form $(-\infty,\infty)$.

 The least number principle for open induction\footnote{In a model of
open induction $R$, if a non-empty set $S\subseteq R$ is defined,
possibly with parameters, by an open formula, and if $S$ is bounded
below, say by $b$, then $S$ has a least element. Otherwise  if
$s\in S$ then the set of non-negative $x\in R$ such that $x+b\le s$ is
inductive. See [7].} implies that there is a greatest element $g\in R$ such
that $R\models \alpha _i(g,\bar{r})$. The maximality of $g$ implies
that $R\models \neg \alpha _i(g+1,\overline{r})$.  Hence $g+1$ is at
least as large as some element of $I_{i,\bar{r}}$. Since
$I_{i,\bar{r}}$ is disjoint from $R$, it follows that $g+1$ is greater
than every element of $I_{i,\bar{r}}$.  Therefore $\beta
_i(g+1,\bar{r})$ holds in $R$. We have found $g$ satisfying the
required condition ($**$).

For the right-to-left direction of the equivalence, assume that for
every $i$, we have elements $b_i\in R$ such that
$$R\models \alpha _i(b_i,\bar{a})\wedge \beta _i(b_i+1,\bar{a}).$$
This same formula will  hold in $F$, hence for each $i$, 
$$
F\models \forall w\,(\gamma _i(w,\bar{a})\rightarrow b_i<w)\wedge \
\forall w\,(\gamma _i(w,\bar{a})\rightarrow w<b_i+1). 
$$
The last displayed statement asserts that every element $b$ of $F$ satisfying $\gamma _i(b,\bar{a})$ lies
between $b_i$ and $b_i+1$. But no element of $R$ lies between $b_i$
and $b_i+1$. Therefore for every $b\in R$, 
$$R\models \neg \bigvee\limits_i\,\gamma _i\left( b,\bar{a}\right).$$
This assertion, together with ($*$), gives the conclusion $R\models \forall x\,\phi (x,\bar{a})$.
\end{proof}

\begin{proof}[Proof of Theorem 1.1]

Let $T$ be the theory of all sentences true in $\bbz$ of the form $\forall x_1\forall
x_2\ldots \forall x_n\exists y\phi$, with $\phi $ an open formula. We
 prove the equivalence $T\Leftrightarrow DOI$.
 
\bigskip
\noindent $T\Rightarrow DOI:$

It is immediate that $T\Rightarrow $ $DOR+\forall _1(\bbz).$ It remains
to verify that $T$ proves all instances of the induction scheme for open
formulas. For each open formula $\phi ,$ the induction axiom 
$$
\forall \overline{x}\,((\phi (\overline{x},0)\wedge \forall y\ge 0\,(\phi (%
\overline{x},y)\rightarrow \phi (\overline{x},y+1)))\rightarrow \forall z\ge
0\,\phi (\overline{x},z)) 
$$
is logically equivalent to
$$\forall \overline{x}\,\forall z\,\exists y\,(z\ge 0\rightarrow
(y\ge 0\wedge \phi (\overline{x},0)\wedge ((\phi (\overline{x}%
,y)\rightarrow \phi (\overline{x},y+1)\,)\rightarrow \phi (\overline{x},z)))).
$$
The latter belongs to $T.$ \

\bigskip
\noindent $DOI\Rightarrow T:$

 Suppose that $R\models DOI.$
Let $\phi (\overline{x},y)$ be an open formula such that $$\bbz\models
\forall \overline{x}\,\exists y\,\phi (\overline{x},y).$$
We  
prove 
that $R\models \forall \overline{x}\,\exists y\,\phi
(\overline{x},y)$.

By Lemma 1.3, there are open formulas $\psi _i$ such that $OI$ proves
the equivalence 
$$
\forall \overline{x}\,((\exists y\,\phi (\overline{x},y))\longleftrightarrow
\bigvee_i\forall z_i\psi (\overline{x},z_i)). 
$$

The last two displayed assertions imply that 
$\bbz\models
\forall \bar{x}\bigvee_i\forall z_i\psi (\overline{x},z_i))$.
But $R$ is diophantine correct, therefore $R\models
\forall \bar{x}\bigvee_i\forall z_i\psi (\overline{x},z_i))$. 
Since $R$ is a model of $OI$, the above equivalence  holds in
$R$. Therefore $R\models \forall 
\bar{x}\,\exists y\,\phi (\bar{x},y)$.  
\end{proof}

\section{ Diophantine Correct Rings of Puiseux Polynomials}

Let $\pp$ denote the ring of Puiseux polynomials with real 
coefficients.  We will think of Puiseux polynomials interchangeably as
formal objects and as functions from the positive reals to the
reals. The following theorem describes the conditions for a
finitely generated subring of $\pp$ to be diophantine correct, in
terms of the coefficients of a list of generating polynomials. We
shall use this theorem to investigate the diophantine correct subrings
of $\pp$. To simplify notation  we temporarily assume  that not all the
 coefficients of the generating polynomials are algebraic
 numbers.

\begin{theorem}
Let $f_1\ldots f_n$ $\in \pp $. Assume that the $f_i$ are 
non-constant, and that the field $F$ generated by the coefficients of the
$f_i$ has transcendence degree at least 1 over $\bbq$.  Let $\bar{r}=r_1\ldots r_l$
be a transcendence basis for $F$ over $\bbq$. Then
\begin{enumerate}
\item  There is an open formula $\theta (x_1\ldots x_l,y_1\ldots y_n)$
such that $\theta (\bar{r},\bar{y})$ holds in $\bbr$ at $\bar{y}$ if and
only if for some real $t\ge 1$, $\bigwedge_iy_i=f_i(t).$
\item Choose $\theta$ as in {\rm (1)}. The ring $\bbz[\bar{f}]$ is
diophantine correct if and only if for every open neighborhood
$U\subseteq \bbr^l$ of $\bar{r}$ and for every positive integer $M$,
there are points $\bar{s}\,\in U$ and integers $\bar{m}$ such that
$\min_i|m_i|>M$ and $\bbr\models\theta (\bar{s},\bar{m})$.
\end{enumerate}
\end{theorem}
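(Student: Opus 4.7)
The plan is to handle part (1) by Thom encoding followed by Tarski quantifier elimination, and to treat the two directions of part (2) separately. For part (1), I would use the primitive element theorem to write $F = \bbq(\bar{r})(\alpha)$ for some $\alpha$ algebraic over $\bbq(\bar{r})$, and then express each coefficient of the $f_i$'s as a rational function in $\bar{r}$ and $\alpha$ with integer-polynomial numerator and denominator. Thom's lemma gives an open formula $\Phi(\bar{x}, u)$ with integer coefficients pinning down $\alpha$ uniquely on a neighborhood of $\bar{r}$. Letting $D$ be a common denominator for the exponents in $\bar{f}$ and $E_i, R_i$ the cleared denominator and numerator polynomials, the existential formula
\begin{equation*}
\exists s \geq 1\, \exists u \left[ \Phi(\bar{x}, u) \wedge \bigwedge_i E_i(\bar{x}) y_i = R_i(\bar{x}, u, s) \right]
\end{equation*}
defines $\{\bar{f}(t) : t \geq 1\}$ when $\bar{x} = \bar{r}$; Tarski's quantifier elimination makes it open.

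For the $(\Rightarrow)$ direction of part (2), I would argue by contrapositive. If (2) fails at some $U_0 \ni \bar{r}$ and integer $M_0$, shrink $U_0$ to a rational-coordinate box. Consider
\begin{equation*}
\tilde{\phi}(\bar{y}) \equiv \exists \bar{x}\left[\bar{x} \in U_0 \wedge \theta(\bar{x}, \bar{y}) \wedge \bigwedge_i |y_i| > M_0\right],
\end{equation*}
an existential integer-coefficient formula made open by quantifier elimination. Taking $\bar{x} = \bar{r}$ shows $\tilde{\phi}(\bar{f}(t))$ holds for all large $t$, hence $\tilde{\phi}(\bar{f})$ holds in $\bbz[\bar{f}]$; by the failure hypothesis no integer $\bar{m}$ satisfies $\tilde{\phi}$. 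Then $\exists \bar{y}\, \tilde{\phi}(\bar{y})$ is true in $\bbz[\bar{f}]$ but false in $\bbz$, contradicting diophantine correctness.

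For the $(\Leftarrow)$ direction, assume (2) and suppose $\exists \bar{y}\, \psi(\bar{y})$ is true in $\bbz[\bar{f}]$, witnessed by $\bar{Q}(\bar{f})$ for integer polynomials $\bar{Q}$. Set $\tilde{\psi}(\bar{y}) = \psi(\bar{Q}(\bar{y}))$; then $\tilde{\psi}(\bar{f}(t))$ holds for all $t \geq T$. The key ingredient is a persistence lemma: there are a neighborhood $U_0$ of $\bar{r}$ and a $T_0$ such that $\tilde{\psi}(\bar{f}_{\bar{s}}(t))$ holds for all $\bar{s} \in U_0$ and $t \geq T_0$, where $\bar{f}_{\bar{s}}$ is the perturbed family obtained by substituting $(\bar{s}, \alpha(\bar{s}))$ for $(\bar{r}, \alpha)$ in the coefficient expressions. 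Granted this, apply (2) with $U_0$ and a bound $M$ exceeding $\sup_{\bar{s} \in \overline{U_0},\, t \in [1, T_0],\, i} |f_{i,\bar{s}}(t)|$; the integer $\bar{m}$ supplied by (2) satisfies $\bar{m} = \bar{f}_{\bar{s}}(t^*)$ for some $t^* > T_0$, so $\tilde{\psi}(\bar{m})$ holds and $\bar{Q}(\bar{m}) \in \bbz^k$ witnesses $\exists \bar{y}\, \psi(\bar{y})$ in $\bbz$.

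The main obstacle is the persistence lemma. For each integer polynomial $P$ appearing in $\tilde{\psi}$, I need to show that the Puiseux polynomial $P(\bar{f}_{\bar{s}})$ in $t$ has locally constant leading exponent with continuously varying non-zero leading coefficient. This follows from two observations. First, any polynomial identity in $\bbq[\bar{Y}, X]$ holding at $(\bar{r}, \alpha)$ lies in the kernel of the evaluation map there, and since $\alpha(\bar{s})$ is constructed via Thom's lemma to satisfy the perturbed minimal polynomial of $\alpha$, every such identity persists at $(\bar{s}, \alpha(\bar{s}))$; hence coefficients vanishing at $\bar{r}$ vanish throughout a neighborhood. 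Second, coefficients non-zero at $(\bar{r}, \alpha)$ remain non-zero on a Euclidean neighborhood by continuity. Together these yield the local constancy of the support of $P(\bar{f}_{\bar{s}})$ and the preservation of the sign of its leading coefficient, which is what allows a uniform $T_0$ across atomic conditions.
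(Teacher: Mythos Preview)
Your proof is correct. Part (1) and the $(\Rightarrow)$ direction of (2) closely parallel the paper's argument (the paper defines each algebraic coefficient $c_i$ by its own formula rather than passing to a primitive element, and argues $(\Rightarrow)$ directly rather than by contrapositive, but these are cosmetic). The genuine difference is in the $(\Leftarrow)$ direction. The paper packages everything into the single formula
\[
\psi(\bar{x}) \;\equiv\; \forall \bar{y}\,\bigl((\theta(\bar{x},\bar{y}) \wedge \textstyle\bigwedge_i |y_i| > M) \to \phi(\bar{y})\bigr),
\]
observes that $\psi(\bar{r})$ holds, and then invokes the one-line lemma that any first-order formula true at an algebraically independent real tuple is true on a Euclidean neighborhood (the truth set is semialgebraic, so a boundary point would satisfy a nontrivial polynomial relation). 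This replaces your persistence lemma entirely, eliminates the need to introduce the perturbed family $\bar{f}_{\bar{s}}$ or the witness polynomials $\bar{Q}$, and has the bonus that it works for \emph{any} $\theta$ satisfying (1), not only the particular one built via Thom encoding. Your route is more explicit about the underlying geometry---the perturbed curves really do eventually land in the region cut out by $\tilde{\psi}$---but you pay for that with the kernel-of-evaluation argument; the paper's model-theoretic shortcut trades that picture for brevity.
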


We give two examples to show how  Theorem 2.1 can be used to determine whether a given subring of $\pp$ is diophantine correct.

\begin{example} Let $R=\bbz[t,f(t)-r_1]$, where $r_1$
is a real transcendental and $f$ is a polynomial with algebraic
coefficients. The formula $\theta(r_1,y_1,y_2)$ expresses the
condition
$$\exists t\ge 1\,(y_1=t\wedge y_2=f(y_1)-r_1).$$
Eliminating the quantifier we obtain\footnote{For the sake of clarity
we  neglect the translation into the language of ordered rings.}
$$\theta(r_1,y_1,y_2): y_1\ge 1\wedge f(y_1)-y_2=r_1.$$

It follows that  the ring $R$ is diophantine correct if and only if
there are positive integers $\bar{y}$  making
$f(y_1)-y_2$ arbitrarily close to $r_1$.

It is known\footnote{This is a consequence of Weyl's Theorem on uniform distribution. See \cite{Ca}, p71.}  that the values of $f(y_1)-y_2$ are either dense in
the real line, if $f$ has an irrational coefficient other than its
constant term, or otherwise discrete.  In the former case $R$ is
diophantine correct. In the latter case $f(y_1)-y_2$ could only
approach $r_1$ by being equal to $r_1$, which is impossible since
$r_1$ is transcendental.

\end{example}             
\begin{example} Let $R=\bbz[t,\sqrt{2}t-r, 2\sqrt{2}rt-s]$, with
$r$ and $s$ algebraically independent. Then
$R$ is diophantine correct if and only if the point
\begin{equation*}
(\sqrt{2}y_1-y_2, 2\sqrt{2}y_1(\sqrt{2}-y_2)y_1-y_3)\tag{$*$}
\end{equation*} 
can be made arbitrarily close to $(r,s)$. This is a non-linear
approximation problem, and there is no well-developed theory of such
problems. In this case the identity
$$(\sqrt{2}y_1-y_2)^2=(2\sqrt{2}y_1(\sqrt{2}-y_2)y_1-y_3)-(2x^2-y^2-y_3)$$
implies that the point ($*$) cannot tend to the pair $(r,s)$ unless
$r^2-s$ is an integer. Hence the requirement that $r$ and $s$ be algebraically
independent cannot be met.
\end{example}

The most general case of Theorem 2.1 cannot be written down 
explicitly, because the algebraic relations between coefficients can
be arbitrarily complex. But, following the notation of Theorem 2.1,  the fact that the $r_i$ are
algebraically independent implies that in the relation
$\theta(\bar{x},\bar{y})$, if $\bar{x}$ is restricted to a small
enough neighborhood of $\bar{r}$ then each $x_i$ is a
 semialgebraic function of
 $\bar{y}$.\footnote{See \cite{Vd}, p32, Lemma 1.3.} Therefore the  problem of whether a
finitely generated ring of Puiseux polynomials is diophantine correct
always has the form: ``Are there tuples of integers $\bar{y}$ such that
the points $(\sigma_1(\bar{y}),\ldots,\sigma_n(\bar{y}))$ tend to the
point $\bar{r}$?'' where the $\sigma_i$ are semialgebraic functions.

This general type of problem is undecidable, since it contains Hilbert's tenth
problem.\footnote{For example,
$f(y_1,\ldots,y_{n-2})^2+(\sqrt{2}y_{n-1}-y_n)^2$ can be made
arbitrarily close to a given number $r$  between 0 and 1 if and only if $f$ has an
integer zero.} But the rings that we actually want to use to construct
models of open induction have a  special form, which leads to a
restricted class of problems that may well be decidable. (See Section 3.)

We return to Theorem 2.1, and the conditions for $\bbz[\bar{f}]$ to be
diophantine correct. The idea of the proof is to think of the
polynomials $f_i(t)$ as functions of both $t$ and $\bar{r}$. If $\phi$ is
an open formula, then the statement that $\phi(\bar{f})$ holds in
$\bbz[\bar{f}]$ can be expressed as another open formula
$\psi(\bar{r})$. The latter must hold on an entire neighborhood of
$\bar{r}$, since the $r_i$ are algebraically independent. If
$\bbz[\bar{f}]\models \phi(\bar{f})$ then we can try to perturb the $r_i$ a tiny
bit for very large $t$ so as to make the values $f_i(\bar{r},t)$ into
integers. The formula $\theta$ expresses the relation between the
perturbed values of $\bar{r}$ and the resulting integer values of
$\bar{f}$.

  We hope that this explanation motivates the use of following three Lemmas. We
 omit the straightforward proofs.

\begin{lemma} 
Let $f_1,f_2\ldots f_n\in \pp$. Let $\phi (\bar{x})$ be an open
formula.  Then $\bbz[\bar{f}]\models \phi (\bar{f})$ if and
only if for
all sufficiently positive $t\in \bbr$, the
formula $\phi (\bar{x})$ holds in $\bbr$ at the tuple of real numbers $\bar{f}(t)$. \qed
\end{lemma}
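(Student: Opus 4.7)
The plan is to reduce the equivalence to a statement about a single Puiseux polynomial and then invoke the fact that the ordering on $\pp$ coincides, asymptotically, with the pointwise ordering of Puiseux polynomials as functions on the positive reals.

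First I would observe that since $\phi(\bar{x})$ is open, it is a Boolean combination of finitely many atomic formulas of the form $p_j(\bar{x}) = 0$ or $p_j(\bar{x}) \ge 0$, with $p_j \in \bbz[\bar{x}]$. If, for each $j$, there is a threshold $T_j$ such that the atomic formula $p_j(\bar{f}) \ge 0$ holds in $\bbz[\bar{f}]$ if and only if $p_j(\bar{f}(t)) \ge 0$ for all $t \ge T_j$, then setting $T := \max_j T_j$ establishes the equivalence for $\phi$ itself, because the truth value of any Boolean combination at a point depends only on the truth values of its atomic components at that point.

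The atomic case reduces to the following fact about $\pp$: for any nonzero $g \in \pp$, written as $g = c\,t^{m/D} + (\text{lower order terms})$ with $c \ne 0$, one has $g(t)/t^{m/D} \to c$ as $t \to \infty$. Consequently the sign of $g(t)$ stabilizes for all large $t$ and agrees with the sign of the leading coefficient $c$, which is the sign of $g$ in the natural ordering of $\pp$. Applying this to $g = p_j(\bar{f}) \in \bbz[\bar{f}] \subseteq \pp$ shows that $p_j(\bar{f}) \ge 0$ holds in the ordered subring $\bbz[\bar{f}]$ precisely when $p_j(\bar{f}(t)) \ge 0$ for all sufficiently large real $t$. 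The analogous equivalence for equalities is immediate, since a nonzero Puiseux polynomial has only finitely many positive real zeros, so $p_j(\bar{f}) = 0$ in $\pp$ iff it vanishes identically as a function, iff $p_j(\bar{f}(t)) = 0$ for all sufficiently large $t$.

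There is no serious obstacle; the Lemma is essentially the statement that the ordering of $\pp$ is the eventual pointwise order on $(0,\infty)$. The only bookkeeping consists of passing from the finitely many atomic subformulas to a common threshold and recalling that the ordering on $\bbz[\bar{f}]$ is the one inherited from $\pp$.
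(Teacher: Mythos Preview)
Your argument is correct and is precisely the ``straightforward'' verification the paper has in mind; indeed the paper omits the proof entirely, so there is nothing to compare against beyond noting that you have supplied the natural argument: reduce to atomic formulas, use that the sign of a nonzero Puiseux polynomial eventually agrees with the sign of its leading coefficient, and take a common threshold. One small phrasing issue: when you write that the atomic formula holds in $\bbz[\bar f]$ ``if and only if $p_j(\bar f(t))\ge 0$ for all $t\ge T_j$,'' what you actually need (and what your leading-coefficient argument in fact proves) is the pointwise statement that for every $t\ge T_j$ the truth value in $\bbz[\bar f]$ matches the truth value in $\bbr$ at $\bar f(t)$; this is what makes the passage to Boolean combinations work.
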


\begin{lemma}
Let $\bar{f}=f_1(t)\ldots f_n(t)\in \pp$. The
ordered ring $\bbz[\bar{f}]$ is diophantine correct if and only if for every open
 formula $\phi (\bar{x})$ such that $\bbz[\bar{f}]\models \phi
 (\bar{f}),$ there exists $\bar{m}\in \bbz$ such that
 $\bbz\models \phi(\bar{m})$. \qed
\end{lemma}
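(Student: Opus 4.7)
The plan is to prove both directions by a straightforward term-substitution argument. The only fact we need is that every element of $\bbz[\bar{f}]$ is of the form $P(\bar{f})$ for some polynomial $P$ with integer coefficients, together with the trivial observation that if $\phi(\bar{x})$ is open and $\bar{P}=(P_1,\ldots,P_k)$ is a tuple of integer polynomials in $n$ variables, then $\phi(P_1(\bar{y}),\ldots,P_k(\bar{y}))$ is again an open formula in $\bar{y}$.

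For the direction ($\Rightarrow$), suppose $\bbz[\bar{f}]$ is diophantine correct and let $\phi(\bar{x})$ be an open formula with $\bbz[\bar{f}]\models\phi(\bar{f})$. Then the existential sentence $\exists\bar{x}\,\phi(\bar{x})$ holds in $\bbz[\bar{f}]$, witnessed by $\bar{f}$ itself. If $\bbz\models\forall\bar{x}\,\neg\phi(\bar{x})$, then this universal sentence would transfer to $\bbz[\bar{f}]$ by diophantine correctness, contradicting the existence of the witness. Hence $\bbz\models\exists\bar{x}\,\phi(\bar{x})$, which supplies the required integer tuple $\bar{m}$.

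For the direction ($\Leftarrow$), assume the stated witness-transfer property. To show diophantine correctness, I verify that every universal sentence $\forall\bar{x}\,\psi(\bar{x})$ true in $\bbz$ holds in $\bbz[\bar{f}]$. Equivalently, I show that if $\bbz[\bar{f}]\models\neg\psi(\bar{g})$ for some tuple $\bar{g}$ in $\bbz[\bar{f}]$, then $\bbz\models\neg\psi(\bar{m})$ for some integer tuple $\bar{m}$. Writing $g_i=P_i(\bar{f})$ with $P_i\in\bbz[\bar{y}]$, set
\[
\phi(\bar{y}) \;:=\; \neg\psi\bigl(P_1(\bar{y}),\ldots,P_k(\bar{y})\bigr),
\]
which is open. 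Since $\bbz[\bar{f}]\models\phi(\bar{f})$, the hypothesis yields an integer tuple $\bar{m}$ with $\bbz\models\phi(\bar{m})$, i.e.\ $\bbz\models\neg\psi(P_1(\bar{m}),\ldots,P_k(\bar{m}))$. The tuple $(P_1(\bar{m}),\ldots,P_k(\bar{m}))$ consists of integers, so $\bbz\not\models\forall\bar{x}\,\psi(\bar{x})$, completing the contrapositive.

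There is no real obstacle here; the argument is entirely formal. The only point worth flagging is that in the ($\Leftarrow$) direction one must substitute the integer polynomials $P_i$ into $\psi$ \emph{before} applying the hypothesis, since the hypothesis is phrased for formulas satisfied by $\bar{f}$ rather than for arbitrary tuples in $\bbz[\bar{f}]$.
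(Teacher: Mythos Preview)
Your proof is correct. The paper omits the proof of this lemma entirely, saying only ``We omit the straightforward proofs,'' and your term-substitution argument is precisely the intended routine verification.
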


\begin{lemma}
Suppose that $\phi (\bar{x})$ is a formula and $\bar{r}\in \bbr^n$
is a tuple of algebraically
independent real numbers.\footnote{Algebraically independent over $\bbq$.} If $ \bbr\models \phi (\bar{r}),$ then there
is a neighborhood $U$ of $\bar{r}$  such that  for every $\bar{u}\in U$,
$\bbr\models\phi(\bar{u})$. \qed
\end{lemma}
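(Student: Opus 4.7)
The plan is to apply Tarski's quantifier elimination and then exploit algebraic independence to convert each atomic condition at $\bar{r}$ into an open condition near $\bar{r}$. By Tarski's theorem, there is a quantifier-free formula $\psi(\bar{x})$ equivalent to $\phi(\bar{x})$ in the theory of real closed fields. Since the language of ordered rings has only $0$ and $1$ as constants, $\psi$ is a Boolean combination of atomic formulas of the forms $p(\bar{x}) = 0$ and $p(\bar{x}) \le 0$ with $p \in \bbz[\bar{x}]$, where $p$ ranges over some finite list of polynomials.

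Next, I would establish the following dichotomy for each such polynomial $p$. If $p(\bar{r}) = 0$, then because $\bar{r}$ is algebraically independent over $\bbq$, $p$ must be the zero polynomial; hence $p(\bar{u}) = 0$ for every $\bar{u} \in \bbr^n$. If $p(\bar{r}) \neq 0$, then by continuity of the polynomial map, $p$ retains the same strict sign (positive or negative) on some open neighborhood $U_p$ of $\bar{r}$.

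Finally, I would take $U$ to be the intersection of the (finitely many) neighborhoods $U_p$. For every $\bar{u} \in U$, each atomic subformula of $\psi$ has the same truth value at $\bar{u}$ as at $\bar{r}$: equalities are preserved because the relevant $p$ is identically zero, and the inequalities $p \le 0$ are preserved because the sign of $p$ is fixed on $U$. Consequently $\bbr \models \psi(\bar{u})$, and therefore $\bbr \models \phi(\bar{u})$.

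There is no substantive obstacle here; the argument rests on the observation that algebraic independence of $\bar{r}$ over $\bbq$ forces any polynomial equality satisfied at $\bar{r}$ to be a trivial identity, so the only conditions that could conceivably fail to extend to a neighborhood --- namely, nontrivial equalities holding at the boundary of a sign stratum --- are ruled out by hypothesis.
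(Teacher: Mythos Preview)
Your proof is correct and is precisely the natural argument the paper has in mind; the paper omits the proof entirely (it writes ``We omit the straightforward proofs'' and marks the lemma with \qed), so there is no alternative approach to compare against. Quantifier elimination plus the observation that algebraic independence over $\bbq$ forces any integer-coefficient polynomial vanishing at $\bar{r}$ to be identically zero is exactly the intended ``straightforward'' reasoning.
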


\begin{proof} [Proof of Theorem 2.1]

To prove Part (1), let $f_i(t)=g_i(\bar{c},t)$, where $g_i$ is a
polynomial with integer coefficients, and the $c_i$ are algebraic over
the field $ \bbq(\bar{r})$. Then  $c_i$ can be defined from the
$r_i$, say by a  formula $\gamma_i(\bar{r},\bar{c})$.   Eliminate 
quantifiers from the formula
$$\exists t\ge 1\,\exists \bar{w}\,\left(
y_i=g_i(\bar{w},t)\wedge \gamma_i(\bar{w},\bar{x})\right)$$ to obtain
an open formula $\theta_i(\bar{x},y_i)$, and let $\theta$ be the
conjunction of the $\theta_i$.

To prove the left-to-right direction of Part (2), assume that $\bbz[\bar{f}]$ is diophantine
 correct, and let $\bar{r}$ and $\theta $ be as in Part (1). Let $U\subseteq \bbr^l$
 be an open set containing $\bar{r}$, and fix a positive integer $M$.  We must find $\bar{s}%
\in U$ and $\bar{m}\in \bbz^n$, with  $|m_i|>M$, such that
$\theta (\bar{s},\bar{m})$ holds in $\bbr$.

Since $U$ is open, there is a formula $\gamma (\bar{x})$ which
holds at $\bar{r}$, and which defines an open set  included in $U$. By Tarski's theorem, there
is an open formula $\theta _1(\bar{y})$ such that 
$$
RCF\vdash \,\,\theta _1(\bar{y})\leftrightarrow \exists \bar{x}\,((\bigwedge_i\left|
y_i\right| >M)\wedge \gamma (\bar{x})\wedge \theta (\bar{x},\bar{y})).
$$

The formula $\theta _1(f_1(t),\ldots ,f_n(t))$ must hold in $\bbr$
for all sufficiently large $t$, since  the functions $\left| f_i(t)\right| $ tend to infinity with
$t$, and since moreover we can witness the above existential quantifier with
$\bar{r}$. Therefore, by Lemma 2.4, $ \bbz[\bar{f}]\models \theta _1(\bar{f})$.

Since $\bbz[\bar{f}]$ is diophantine correct, it follows that there
are integers $\bar{m}\in \bbz^n$ satisfying  
$\theta _1(\bar{y})$. Substituting $\bar{m}$ for $\bar{y}$ in the
above equivalence, the right hand side gives a tuple
$\bar{s}\in \bbr$ such that
$$\bbr\models ((\bigwedge_i\left|
m_i\right| >M)\wedge \gamma (\bar{s})\wedge \theta
(\bar{s},\bar{m})). $$

Since $\gamma(\bar{x})$ defines a subset of $U$, the displayed
statement confirms that $\bar{s}$ and $\bar{m}$ are the tuples required.

To prove the right-to-left direction of Part (2), let $\phi$ be an open
formula such that $\bbr[\bar{f}]\models \phi(\bar{f})$. We  prove
that there are  integers $\bar{m}$ such that $\phi(\bar{m})$ holds in
$\bbz$. It will follow immediately from Lemma 2.5 that
$\bbz[\bar{f}]$ is diophantine correct.

Since $\phi $ is open and since $\phi (\bar{f})$ holds in
 $\bbz[\bar{f}]$, it follows from  
 Lemma 2.4  that $\phi (f_1(t),\ldots ,f_n(t))$ holds in $\bbr$
 for all sufficiently positive $t$. Choose $k>1$ such that $\phi
 (f_1(t),\ldots ,f_n(t))$ holds in $\bbr$ for $t> k$.

The set of points $(f_1(t),\ldots ,f_n(t))$ with $1\le t\le k$ is
bounded. Therefore we can choose $M\in\bbz$ so large that if $t\ge 1$
and if $\min_i \,\left| f_i(t)\right| >M$, then $t>k$. For this choice
of $M$, the formula 
$\psi (\bar{x})$ will hold in $\bbr$ at $\bar{r}$, where $\psi (\bar{x})$ is the formula 
$$
\forall \bar{y}\,\,((\theta (\bar{x},\bar{y})\wedge (\bigwedge_i\left|
y_i\right| >M))\rightarrow \phi (\bar{y})). 
$$
By Lemma 2.6, the subset of $\bbr^l$ defined by $\psi(\bar{x})$ must
include a neighborhood $U$ of $\bar{r}$. By  hypothesis, we can choose 
 $\bar{s}\in U$ and $\bar{m}\in \bbz^n$ so that 
 $$\bbr\models\theta (\bar{s},\bar{m})\wedge \bigwedge_i\left| m_i\right| >M.$$
Instantiating the universal quantifier in $\psi(\bar{s})$ with
$\bar{m}$,  we conclude  that
$\phi (\bar{m})$ holds in $\bbz$. 
\end{proof}

\begin{remark} If the $f_i$ have algebraic coefficients, then
$\bbr[\bar{f}]$ is diophantine correct if and only if  there is a
sequence of real numbers $u_i$ tending to infinity such that
$\bar{f}(u_i)\in \bbz^n$. To prove this, one takes the transcendence basis $\bar{r}$ to empty in the
proof of Theorem 2.1 and one follows the proof, making all the necessary
changes.  This case is not important for our purposes because of the
following fact.
\end{remark}
\begin{proposition}
There are no  models of $DOI$ of transcendence degree one.
\end{proposition}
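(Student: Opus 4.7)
The plan is to suppose $R$ is a model of $DOI$ of transcendence degree one over $\bbq$ with $R \ne \bbz$ and derive a contradiction. First I observe that any element $r \in R \setminus \bbz$ must be infinite: discreteness of $R$ together with $\bbz \subseteq R$ forces bounded elements of $R$ to lie in $\bbz$, so $r \notin \bbz$ implies $r$ is unbounded. Any such infinite $r$ must be transcendental over $\bbq$, for otherwise the unique real ordering of $\bbq(r)$ would force $r$ to have finite magnitude. So $R$ contains a positive infinite transcendental $t$. By the transcendence-degree-one assumption, every element of $R$ is algebraic over $\bbq(t)$, and by Puiseux's theorem $R$ embeds as an ordered subring into the field of descending Puiseux series with real algebraic coefficients; each element of $R$ is thereby realized as a Puiseux polynomial with algebraic coefficients.

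Next, since $\sqrt{2}\,t$ lies in the real closure of $R$, open induction gives $s := \lfloor \sqrt{2}\,t\rfloor \in R$, whose Puiseux representation has the form
\[
s \;=\; \sqrt{2}\,t + a_0 + h(t),
\]
where $a_0$ is a real algebraic constant in $[-1, 0]$ and $h(t) = \sum_{k \ge 1} a_{-k}\,t^{-k/D}$ (for some $D \ge 1$) satisfies $h(t) \to 0$ as $t \to \infty$. The subring $R_0 := \bbz[t, s] \subseteq R$ inherits diophantine correctness, and since $t$ and $s$ are Puiseux polynomials with algebraic coefficients, Remark~2.7 applies: $R_0$ is diophantine correct iff there is an unbounded sequence of positive reals $u_i$ with $(u_i, s(u_i)) \in \bbz^2$. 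The first coordinate forces $u_i \in \bbz_{>0}$, so I would need to show that there are infinitely many positive integers $u$ with
\[
\sqrt{2}\,u + a_0 + h(u) \;\in\; \bbz.
\]

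The crux of the proof, and the main obstacle, is to rule this out. My plan is to fix an automorphism $\sigma \in \mathrm{Aut}(\overline{\bbq}/\bbq)$ with $\sigma(\sqrt{2}) = -\sqrt{2}$. Whenever $s(u) = n \in \bbz$ for some positive integer $u$, we have $\sigma(s(u)) = \sigma(n) = n = s(u)$, which rearranges to
\[
2\sqrt{2}\,u \;=\; \bigl(\sigma(a_0) - a_0\bigr) + \bigl(\sigma(h(u)) - h(u)\bigr).
\]
For each $u \in \bbz_{>0}$ the image $\sigma(u^{1/D})$ is a complex $D$-th root of $u$ of modulus $u^{1/D}$, so each term of $\sigma(h(u))$ has modulus at most $|\sigma(a_{-k})|\,u^{-k/D}$, whence $|\sigma(h(u)) - h(u)| \to 0$ as $u \to \infty$. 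The right-hand side is therefore uniformly bounded in modulus, while the left-hand side grows linearly in $u$, so only finitely many positive integers $u$ can satisfy the equation. This contradicts Remark~2.7 applied to $R_0$, hence the diophantine correctness of $R$, completing the proof.
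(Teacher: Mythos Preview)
The decisive step fails. You want $|\sigma(h(u))|\to 0$, and you argue by applying $\sigma$ term-by-term to the infinite series $h(u)=\sum_{k\ge 1}a_{-k}u^{-k/D}$. But an automorphism of $\overline{\bbq}/\bbq$ need not be continuous, so $\sigma\bigl(\sum a_{-k}u^{-k/D}\bigr)=\sum\sigma(a_{-k})\,\sigma(u^{-k/D})$ is unjustified for an infinite sum. Worse, the conclusion is outright false: from $h(u)=s(u)-\sqrt{2}\,u-a_0$ and $s(u)=n\in\bbz$ one computes directly
\[
\sigma(h(u))=\sigma(n)-\sigma(\sqrt{2})\,u-\sigma(a_0)=n+\sqrt{2}\,u-\sigma(a_0),
\]
so $|\sigma(h(u))|\sim 2\sqrt{2}\,u$, and your identity $2\sqrt{2}\,u=(\sigma(a_0)-a_0)+(\sigma(h(u))-h(u))$ collapses to a tautology carrying no information. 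Your estimate would be valid if the tail $h$ were a \emph{finite} sum, i.e.\ if $s$ were a Puiseux polynomial; but you have not justified the earlier assertion that every element of $R$ is a Puiseux polynomial (and you then contradict it by allowing $s$ an infinite tail). There is also the more minor point that Remark~2.7 is stated and proved only for Puiseux polynomials, so invoking it for a genuine series needs an extra argument.

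For comparison, the paper's proof bypasses Puiseux expansions entirely. One takes $b=\lfloor\sqrt[3]{2}\,a\rfloor$, uses transcendence degree one to find an irreducible $p\in\bbz[x,y]$ with $p(a,b)=0$, and observes that diophantine correctness forces $p=0$ to have infinitely many integer zeros. Since $b/a$ is infinitely close to $\sqrt[3]{2}$, the leading homogeneous part $p_n$ is divisible by $y^3-2x^3$; but a classical result says an irreducible plane curve with infinitely many integer points has leading form a constant times a power of a linear or quadratic form. Note that the \emph{cubic} irrationality is essential here: with $\sqrt{2}$ in place of $\sqrt[3]{2}$ the leading form could legitimately be a power of $y^2-2x^2$, and Pell's equation shows no contradiction arises at that stage.
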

\begin{proof} Suppose by way of contradiction that $R$ is a model of
$DOI$ of transcendence degree 1. 
 Let $a$ be a non-standard element of $R$. Let $b\in R$ be an integer part
of $\root{3} \of {2}a$.  Then there is a nonzero polynomial $p$ with integer
coefficients such that $p(a,b)=0$. We can assume that $p$ is
irreducible over the rationals. Since $R$ is diophantine correct, the
equation $p(x,y)=0$ must have infinitely many standard solutions. We
 shall prove that this is impossible.

Write $p=p_0+\ldots+p_n$, where $p_i$ is homogeneous of degree $i$,
and $p_n\ne0$. Then $p(a,b)$ has the form $\sum_{i=0}^np_i(1,b/a)a^i.$

Observe that $b/a$ is finite, in fact infinitely close to $\root 3\of
2$, hence all the values $p_i(1,b/a)$ are finite.  It follows that for
$p(a,b)$ to be zero, ${p_n(1,b/a)}$ must be infinitesimal; otherwise
$p_n(1,b/a)a^n$ would dominate all the other terms $p_i(1,b/a)a^i$,
and then $p(a,b)$ could not even be finite.

Since  $b/a$ is infinitely close to
$\root 3\of 2$, it follows that $p_n(1,\root 3\of 2)=0$.
 Since $p_n$ has
integer coefficients, the polynomial $y^3-2$ must divide
$p_n(1,y)$. It follows that $y^3-2x^3$ divides $p_n$.

But if $f(x,y)$ is any polynomial
with integer coefficients irreducible over the rationals, and if
$f$ has infinitely many integer zeros, then the leading
homogeneous part of $f$ must be a constant multiple of a power of a linear or quadratic
form.\footnote{See \cite{Mo}, p266.} This is not the case for $p_n$, thanks to the factor $y^3-2x^3$. Therefore $p$ cannot
have infinitely many integer solutions. This is the required contradiction.
\end{proof}

\section{  Generalized Polynomials}
\subsection*{ Special Sequences of Polynomials} 
We now focus on a restricted class of rings, which arise by adjoining
sequences of integer parts using Wilkie's extension theorem (given in
the Introduction.) A similar but more general type of sequence was defined in \cite{Be} to
construct normal models of open induction.

\begin{definition} A sequence of   polynomials is \textbf{special}
 if it has the form 
 $$f_0(t),f_1(t)-r_1,\ldots,f_n(t)-r_n,$$ where 
\begin{enumerate}
\item $f_0(t)=t$, and the coefficients of $f_1(t)$ are  algebraic.
\item  The $r_i$ are algebraically independent real numbers.
\item For $i>1$, the polynomial $f_i$ has the form $g_i(t,r_1\ldots r_{i-1})$
where $g_i$ is a polynomial with algebraic coefficients.
\end{enumerate}
\end{definition}

Note that  a  ring $\bbz[\bar{f}]$ generated by a special sequence
 contains the polynomial
$t$. As a consequence,  a  polynomial is algebraic over $\bbz[\bar{f}]$ if and only if its
coefficients are algebraic over the field generated by the coefficients
of the $f_i$. 

\begin{example} The sequence of polynomials $t,\sqrt{2}t-r^2,rt-s$, where $r,s$ are
algebraically independent real numbers, is not a special sequence
because $rt$ is not a polynomial in $r^2$ and $t$. The sequence $t, 2t-r,
(r^2+s)t-s$ is  not a special sequence because $(r^2+s)t$ is not
a polynomial in $r$ and $t$.
\end{example}

\smallskip

The conditions for a ring generated by a
special sequence to be diophantine correct can be written out
explicitly.

\begin{proposition}
Suppose that $f_0(t),f_1(t)-r_1,\ldots,f_n(t)-r_n$ is a special
sequence, with $0<r_i<1$. Let $R=\bbz[\bar{f}]$. Choose polynomials
$g_i(t,\bar{r})$  as in Item {\rm(3)} of Definition {\rm3.1}. 

Define the polynomials $\sigma_i$ inductively as
follows. Let $\sigma_1(y_0)=f_1(y_0)$. For $i>1$, let
$$\sigma_i(y_0\ldots y_{i-1})=g_i(y_0,\sigma_1(y_0)-y_1,\ldots,\sigma_{i-1}(y_0\ldots
y_{i-2})-y_{i-1}).$$ Then 

\begin{enumerate}
\item $R$ is diophantine correct if and only if the
system of inequalities
\begin{align*} &\left|\sigma_1(y_0)-y_1-r_1\right|<\epsilon\\
&\left|\sigma_2(y_0,y_1)-y_2-r_2\right|<\epsilon\\
&\quad\quad\ldots\ldots\\ &\left|\sigma_n(y_0,y_1\ldots
y_{n-1})-y_n-r_n\right|<\epsilon\end{align*}
has integer solutions $y_i$ for every positive $\epsilon$. 

\item For all sufficiently small positive $\epsilon$, if $\bar{y}$ is a solution to the inequalities  {\rm (1)} then $y_i=\lfloor \sigma_i(y_0\ldots
y_{i-1})\rfloor$. 
\end{enumerate}
\end{proposition}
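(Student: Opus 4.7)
The plan is to apply Theorem 2.1 to $R = \bbz[\bar{f}]$ and compute the formula $\theta$ explicitly for our special sequence. Since the field generated by the coefficients of $\bar{f}$ has $\bar{r} = (r_1, \ldots, r_n)$ as a transcendence basis over $\bbq$, Theorem 2.1 applies with $l = n$.

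First I would compute $\theta$. By Theorem 2.1(1), $\theta(\bar{x}, \bar{y})$ is obtained by eliminating the existential quantifier from
\[
\exists t \geq 1 \,\Big(y_0 = t \ \wedge \ \bigwedge_{i=1}^n y_i = g_i(t, x_1, \ldots, x_{i-1}) - x_i\Big),
\]
where $g_1(t) = f_1(t)$ and the $g_i$ for $i > 1$ come from Definition 3.1(3). Setting $t = y_0$ eliminates the quantifier, and the inductive definition of $\sigma_i$ is designed precisely so that one can solve inductively for the $x_i$ to obtain
\[
\theta(\bar{x}, \bar{y}) \ \equiv \ \Big(y_0 \geq 1 \ \wedge \ \bigwedge_{i=1}^n x_i = \sigma_i(y_0, \ldots, y_{i-1}) - y_i\Big).
\]

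Part (2) is then immediate: for $\epsilon < \min_i \min(r_i, 1-r_i)$, any solution $\bar{y}$ satisfies $\sigma_i(\bar{y}) - y_i \in (r_i - \epsilon, r_i + \epsilon) \subseteq (0, 1)$, so $y_i = \lfloor \sigma_i(\bar{y}) \rfloor$. For Part (1), Theorem 2.1(2) combined with the explicit form of $\theta$ and the choice $U = \{\bar{x} : |x_i - r_i| < \epsilon \text{ for all } i\}$ gives: $R$ is diophantine correct iff for every $\epsilon > 0$ and every $M > 0$, the inequalities admit integer solutions with $|y_i| > M$ for all $i$. The forward direction of the proposition's equivalence follows by simply dropping the magnitude requirement.

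The main obstacle is the reverse direction: promoting the weaker hypothesis "integer solutions exist for every $\epsilon$" to the stronger conclusion "integer solutions with arbitrarily large $|y_i|$ exist for every $\epsilon$". I would first argue that the set of integer tuples solving the system for some $\epsilon > 0$ must be infinite: otherwise some fixed $\bar{y}^{*}$ would solve it for arbitrarily small $\epsilon$, hence satisfy $\sigma_i(\bar{y}^{*}) - y_i^{*} = r_i$ exactly, making $r_i$ algebraic (since $g_i$, and so $\sigma_i$, has algebraic coefficients) and contradicting algebraic independence of $\bar{r}$. Given an infinite solution set, Part (2) shows $\bar{y}$ is determined by $y_0$ once $\epsilon$ is small, so $y_0$ must take arbitrarily large values. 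Since each $f_i$ has positive degree in $t$, one has $\sigma_i(\bar{y}) \approx f_i(y_0)$ and hence $|\sigma_i(\bar{y})| \to \infty$ as $|y_0| \to \infty$ along solutions, forcing $|y_i| = |\lfloor \sigma_i(\bar{y}) \rfloor| \to \infty$. Picking a solution with $|y_0|$ sufficiently large then yields $|y_i| > M$ for all $i$.
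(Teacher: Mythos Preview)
Your approach is the same as the paper's: the paper's proof of Item~(1) is the single sentence ``Item~(1) simply spells out Theorem~2.1 for rings generated by special sequences,'' and for Item~(2) it just notes that $0<r_i<1$ forces $\sigma_i(\bar y)-y_i\in(0,1)$ for small $\epsilon$. Your proof is strictly more detailed: you correctly observe that Theorem~2.1(2) carries the extra requirement $\min_i|m_i|>M$ while the proposition does not, and you supply the missing argument (no exact solution by algebraic independence, hence infinitely many solutions, hence unbounded $y_0$, hence via $\sigma_i(\bar y)\approx f_i(y_0)$ unbounded $y_i$) that the paper leaves implicit.
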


\begin{proof} Item (1) simply spells out Theorem 2.1 for  rings generated by 
special sequences. Item (2) follows from the assumption that the $r_i$ are
in the interval $(0,1)$, hence so are the values  $\sigma_i(y_0\ldots
y_{i-1})-y_i$ if $\epsilon$ is small enough. 
\end{proof}

There is another way to think of the inequalities in Proposition 3.3.
Since the equation 
$y_i=\lfloor \sigma_i(y_0\ldots y_{i-1})\rfloor $, holds for all small
enough $\epsilon$, it follows that
$$\sigma_i(y_0\ldots y_{i-1})-y_i=\{\sigma_i(y_0\ldots y_{i-1})\},$$
where $\{\,\cdot\,\}$ is the fractional part operator, defined by
$\{x\}=x-\lfloor x\rfloor$. Replacing $y_1$ with
$\lfloor\sigma_1(y_0)\rfloor$ in the right hand side
of the above equation and continuing in this fashion, we eventually obtain an
expression for $\{\sigma_i(y_0\ldots y_{i-1})\}$ as a function of
$y_0$ alone, where the expression is build from constants and the ring
operations and the fractional and integer part
operators. Following \cite{Bl}, we will call such expressions {\it
bounded generalized polynomials.} The reason for performing this transformation is to relate our questions about diophantine correct rings to a substantial body of results about the distribution of the values of generalized polynomials.

\begin{proposition} Assume $0<r_i<1$. For each system of
polynomial inequalities
\begin{align*} &\left|\sigma_1(y_0)-y_1-r_1\right|<\epsilon\\
&\left|\sigma_2(y_0,y_1)-y_2-r_2\right|<\epsilon\\
&\quad\quad\ldots\ldots\\ &\left|\sigma_n(y_0,y_1\ldots
y_{n-1})-y_n-r_n\right|<\epsilon
\end{align*}

there is an associated system of bounded generalized polynomial inequalities

$$\bigwedge_{i=1}^n|\gamma_i(y_0)-r_i|<\epsilon$$
where the $\gamma_i$ are defined  by
$$\gamma_i(y_0)=\sigma_i(y_0\ldots y_{i-1})-y_i,$$
and the $y_i$ for $i>0$ are defined recursively by
$$y_i=\lfloor \sigma_i(y_0\ldots y_{i-1})\rfloor.$$

Specifically,

\begin{align*} \gamma_1(y_0)&=\{\sigma_1(y_0)\}\\
 \gamma_2(y_0)&=\{\sigma_2(y_0,\lfloor \sigma_1(y_0)\rfloor)\} \\
 \gamma_3(y_0)&=\{\sigma_3(y_0,\lfloor \sigma_1(y_0)\rfloor,\lfloor \sigma_2(y_0,\lfloor \sigma_1(y_0)\rfloor)\rfloor)\} \\
 \ldots\ldots \end{align*}

For all sufficiently small $\epsilon>0$ an integer $y_0$ satisfies the
associated system if and only if there are integers $y_1\ldots y_n$
such that $y_0\ldots y_n$ satisfies the original system.
\end{proposition}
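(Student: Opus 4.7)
The plan is to prove the biconditional by showing that the two recursive definitions of the tuple $y_1,\ldots,y_n$ coincide once $\epsilon$ is small enough: one definition comes from solving the polynomial system, and the other from the iterated floors used to build the $\gamma_i$. Both directions then amount to observing that $\sigma_i(y_0,\ldots,y_{i-1})-y_i$ equals $\gamma_i(y_0)$ whenever the $y_i$ are these integer parts. The whole argument is an induction on $i \le n$, and Proposition 3.3(2) is the single nontrivial input.

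For the direction asserting that a solution of the associated system yields a solution of the original one, I would start with an integer $y_0$ satisfying $|\gamma_i(y_0)-r_i|<\epsilon$ for every $i$ and define $y_i \in \bbz$ by the recursion $y_i = \lfloor \sigma_i(y_0,\ldots,y_{i-1})\rfloor$. Unwinding the explicit formulas displayed for $\gamma_1,\gamma_2,\gamma_3$ (and the obvious induction beyond) shows that $\gamma_i(y_0) = \{\sigma_i(y_0,\ldots,y_{i-1})\} = \sigma_i(y_0,\ldots,y_{i-1}) - y_i$. The inequality on $\gamma_i(y_0)$ is then literally the $i$-th inequality of the original system.

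For the converse, I would assume integers $y_1,\ldots,y_n$ together with $y_0$ satisfy the original system with $\epsilon$ below the threshold supplied by Proposition 3.3(2). That proposition (whose proof uses $0<r_i<1$) forces $y_1 = \lfloor \sigma_1(y_0)\rfloor$; plugging that back into the second inequality gives $y_2 = \lfloor\sigma_2(y_0,\lfloor\sigma_1(y_0)\rfloor)\rfloor$; and an induction on $i$ shows that every $y_i$ matches the integer-part expression built into $\gamma_i$. Therefore $\sigma_i(y_0,\ldots,y_{i-1}) - y_i = \gamma_i(y_0)$ for every $i$, and the original inequalities become the associated ones.

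The only delicate point is uniformity in $\epsilon$: Proposition 3.3(2) yields, for each $i$, some threshold $\epsilon_i$ below which $\sigma_i(y_0,\ldots,y_{i-1}) - y_i$ is forced to equal $\{\sigma_i(y_0,\ldots,y_{i-1})\}$. I would take $\epsilon$ below $\min_{1\le i\le n}\epsilon_i$, which is legitimate because $n$ is fixed. I do not expect any genuine obstacle here; once the floor identifications are in force, the equality $\gamma_i(y_0) = \sigma_i(y_0,\ldots,y_{i-1}) - y_i$ is just bookkeeping in the recursive definition.
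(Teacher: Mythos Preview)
Your proposal is correct and is precisely the argument the paper has in mind; in fact the paper states Proposition 3.4 without any proof, treating it as an immediate reformulation of Proposition 3.3(2) together with the recursive definitions. Your write-up simply makes explicit the bookkeeping the paper leaves to the reader, and the single threshold $\epsilon<\min_i\min(r_i,1-r_i)$ already handles the uniformity point you raise.
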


Since open induction is essentially the theory of abstract integer
parts, there is an obvious connection between open induction and
generalized polynomials, yet a systematic study of generalized
polynomials {\it v\'is a v\'is} open induction remains to be done.

There are generalized polynomial identities, that hold for all integers,
such as $$\{\sqrt{2}x\}^2=\{2\sqrt{2}x\{\sqrt{2}x\}\}.$$
Observe that this phenomenon can be explained by the fact that the
ring $$\bbz[t,\sqrt{2}t-r,2\sqrt{2}rt-s],$$ where $r$ and $s$ are
algebraically independent real numbers, does not extend to a model of open
induction. Indeed, we have the identity
$$H(t,\sqrt{2}t-r,2\sqrt{2}rt-s)=s-r^2,$$ where $H(x,y,z)=2x^2-y^2-z$;
so the ring is not discretely ordered.
Substituting $\{\sqrt{2}x\}$ for $r$ and $\{2\sqrt{2}x\{\sqrt{2}x\}\}$
for $s$ one immediately deduces the generalized polynomial identity mentioned above.

Do  all generalized polynomial identities 
arise in this way from ordered rings that violate open induction? 

\subsection*{ Theorems on Generalized Polynomials}
The study of  systems of polynomial inequalities of type
\begin{align*} &\left|\sigma_1(y_0)-y_1\right|<\epsilon\\
&\left|\sigma_2(y_0,y_1)-y_2\right|<\epsilon\tag{$*$}\\
&\quad\quad\ldots\ldots\\ &\left|\sigma_n(y_0,y_1\ldots
y_{n-1})-y_n\right|<\epsilon\end{align*}
goes back at
least to Van der Corput.
He proved

\begin{theorem}[\textrm {Van der Corput \cite{Vc}}]
If  a system of polynomial inequalities of type {\rm(}$*${\rm)} has a solution in integers then
it has infinitely many integer solutions. Moreover, the set $S\subseteq\bbz$ of integers $y_0$ for which there
is a solution $y_0\ldots y_n$ 
is syndetic.\footnote{A subset $S$ of $\bbz$ is syndetic if there are finitely many integers 
$v_i\in \bbz$ such that the union of translates  $\bigcup_iS+v_i$ is
equal to $\bbz$. Equivalently, the gaps between the elements of
$S$ have bounded lengths.}
\end{theorem}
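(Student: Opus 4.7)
The plan is to reduce to a single-variable problem via Proposition~3.4 and then to apply classical equidistribution theorems. Since the $r_i$ are absent from the present statement (equivalently, $r_i = 0$), Proposition~3.4 converts the system $(*)$ into a single-variable system $\bigwedge_{i=1}^{n}|\gamma_i(y_0)| < \epsilon$, where each $\gamma_i$ is a bounded generalized polynomial in the integer variable $y_0$ built by iterating polynomials with fractional-part operators. It therefore suffices to prove that the solution set of this reduced system, once non-empty, is syndetic in $\bbz$.

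I would proceed by induction on the total number $k$ of integer-part operators appearing in the $\gamma_i$. When $k = 0$ the $\gamma_i$ are ordinary polynomials in $y_0$, and the multidimensional Weyl theorem (whose one-dimensional case is Van~der~Corput's difference lemma) shows that the orbit $(\gamma_1(n), \ldots, \gamma_n(n)) \bmod 1$ is equidistributed on an affine subtorus $T \subseteq (\bbr/\bbz)^n$. The existence of a single integer solution forces $T$ to meet the $\epsilon$-box around the origin, and equidistribution on $T$ together with the Bohr-set structure of the preimage then yields that the solution set has positive lower density and is syndetic (one also has to dispose separately of the degenerate case in which all non-constant coefficients of every $\gamma_i$ are rational, where $\gamma_i(n) \bmod 1$ is simply periodic in $n$ and syndeticity of the solution set is immediate). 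For the inductive step, I would pick an outermost integer-part operator $\lfloor p(y_0) \rfloor$ appearing inside some $\gamma_j$ and partition $\bbz$ into the level sets $L_m = \{n : m \le p(n) < m+1\}$. On each $L_m$ the subexpression $\lfloor p(y_0) \rfloor$ is constantly $m$, so the number of integer-part operators drops and the inductive hypothesis applies, supplying a syndetic solution set inside each $L_m$.

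The main obstacle is assembling the per-level solution sets into a single syndetic subset of $\bbz$, and controlling the joint rather than coordinatewise equidistribution in the base case. This requires uniform control, in $m$, both on the gap structure of $L_m$ and on that of the solution set within $L_m$; it rests on the same Weyl/Van~der~Corput machinery used in the base case, since the level sets $L_m$ themselves form a Bohr-like partition (as $p$ is a generalized polynomial), so that a bounded-gap sub-partition suffices to produce a syndetic union. The combinatorial heart of the theorem is the dichotomy that any finite system of polynomial small-fractional-part conditions defines either the empty set or a syndetic subset of $\bbz$, a dichotomy that propagates stably under the compositions of polynomials with $\lfloor\,\cdot\,\rfloor$ by which the $\gamma_i$ are constructed.
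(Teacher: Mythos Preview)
The paper does not give its own proof of this theorem: it is stated as a classical result and attributed to Van der Corput \cite{Vc} with no argument supplied. So there is nothing in the paper to compare your proposal against; I can only assess the proposal on its own terms.

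As written, your proposal is an outline with a genuine gap in the inductive step. When you freeze an outermost $\lfloor p(y_0)\rfloor$ to the value $m$ on the level set $L_m$, you do reduce the number of floor operators by one, but you now have a \emph{different} system for each $m$: the coefficients of the resulting generalized polynomials depend on $m$. The inductive hypothesis, applied separately on each $L_m$, therefore yields a syndeticity constant that may depend on $m$, and nothing in your argument bounds this constant uniformly. Without that uniformity you cannot conclude that the union over $m$ of the per-level solution sets is syndetic in $\bbz$; a union of syndetic-within-$L_m$ sets with gap bounds tending to infinity need not be syndetic. You correctly flag this as ``the main obstacle,'' but the sentence that follows (``it rests on the same Weyl/Van der Corput machinery\ldots'') is an assertion, not an argument. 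This is precisely the hard part of the theorem, and the difference method in \cite{Vc} is designed to handle exactly this uniformity issue directly, without passing through a level-set decomposition.

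There is also a smaller technical wrinkle: Proposition~3.4 is stated under the hypothesis $0<r_i<1$, which is what forces $y_i=\lfloor\sigma_i\rfloor$. In the system $(*)$ as stated here the $r_i$ are absent, so for a given $y_0$ the integer $y_i$ witnessing $|\sigma_i-y_i|<\epsilon$ may be either $\lfloor\sigma_i\rfloor$ or $\lceil\sigma_i\rceil$; the correct single-variable reduction involves the distance-to-nearest-integer function rather than the fractional part. This is reparable, but it means you cannot invoke Proposition~3.4 verbatim.
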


As far we know no one has given an algorithm for the solvability
of arbitrary systems of type ($*$). We believe that  if a system of type ($*$) with real algebraic coefficients has no integer solutions, then this fact is provable
from the axioms of open induction.

By far the most far-reaching results on generalized polynomials are to
be found in Bergelson and Leibman \cite{Bl}.  We paraphrase an important
result from this paper, for use in Section 4.

\begin{theorem} [\textrm {Bergelson and Leibman \cite{Bl}}]
Let $g:\bbz\to\bbr^n$ be a map whose components are bounded
generalized polynomials. Then there is a subset $S$ of $\bbz$ of
density\footnote{Density means here Folner density, defined as
follows. A Folner sequence (in $\bbz$) is a sequence of finite subsets
$s_i$ of $\bbz$ such that for every $n\in\bbz$,
$\lim_{m\to \infty}{|(s_m+n)\Delta s_m|/|s_m|}=0$. Here $\Delta$ means
symmetric difference, and $s_m+n=\{x+n:x\in s_m\}$. A set of integers
$S$ has Folner density zero if $\lim_{n\to\infty}{|S\cap
s_n|/|s_n|}=0$ for every Folner sequence $s_n$.} zero such that the
closure of the set of values of $g$ on the integers not in $S$ is a
semialgebraic set $C$. {\rm(}I.e. $C$ is definable by a formula with real
parameters in the language of ordered rings.{\rm)}  If the coefficients of
$g$ are algebraic then $C$ is definable without parameters.
\end{theorem}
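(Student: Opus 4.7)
The plan is to build on the nilmanifold structure theory for bounded generalized polynomials, which is the main technical thrust of Bergelson--Leibman. First I would establish a \emph{nilpotent representation}: every bounded generalized polynomial $g:\bbz\to\bbr^n$ can be written, outside some exceptional set of integers, in the form $g(m)=\Phi(a^m\Gamma)$, where $G$ is a simply connected nilpotent Lie group, $\Gamma\subset G$ a discrete cocompact subgroup, $a\in G$ a fixed element, and $\Phi:G/\Gamma\to\bbr^n$ a piecewise polynomial map in Mal'cev coordinates. This is proved by induction on the depth of nested floor operators, enlarging $G$ at each step so as to absorb the new floor symbol into an additional coordinate and interpret it as a piecewise polynomial function on a (semi)algebraic fundamental domain.

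Next I would invoke Leibman's generalized Weyl equidistribution theorem: the orbit $\{a^m\Gamma:m\in\bbz\}$ is equidistributed with respect to normalized Haar measure on its closure $Y$, which is itself a finite union of sub-nilmanifolds of $G/\Gamma$. The piecewise structure of $\Phi$ means that its discontinuity set is a finite union of proper algebraic subvarieties of $G/\Gamma$; these meet $Y$ in a Haar null set, so by equidistribution the set $S$ of $m\in\bbz$ for which $a^m\Gamma$ lies within a shrinking neighborhood of the discontinuities has Folner density zero. For $m\notin S$, the value $g(m)$ lies in the image $\Phi(Y)$; by Tarski--Seidenberg this image, hence also its closure, is semialgebraic. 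Taking $C=\overline{\Phi(Y)}$ yields the conclusion.

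For the final assertion about algebraic coefficients, I would trace through the construction to confirm that when every coefficient appearing in $g$ is algebraic, the data $(G,\Gamma,a,\Phi)$ can likewise be chosen so that the corresponding structure constants, lattice generators, translation element, and defining inequalities of the piecewise pieces are all algebraic over $\bbq$. Since each real algebraic number is definable in the ordered field $\bbr$ without parameters, the semialgebraic description of $C$ inherits a parameter-free form.

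The hard part is the first step: establishing the nilpotent representation while simultaneously keeping track of the piecewise polynomial structure of the factor map under arbitrary nesting of $\lfloor\,\cdot\,\rfloor$ and ring operations, and verifying that the resulting pieces fit together compatibly with the group structure on $G/\Gamma$. Once this structural reduction is in place, the remaining steps are essentially standard applications of equidistribution on nilmanifolds together with semialgebraic geometry.
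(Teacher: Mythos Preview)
The paper does not prove this theorem at all: it is quoted as a black box from Bergelson and Leibman~\cite{Bl} and used later in Section~4. So there is no ``paper's own proof'' to compare against; the author simply invokes the result.

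Your sketch is a reasonable high-level summary of the actual Bergelson--Leibman argument in~\cite{Bl}: the nilmanifold representation of bounded generalized polynomials (their Theorem~A), Leibman's equidistribution theorem for polynomial orbits on nilmanifolds, and the observation that the image of a sub-nilmanifold under a piecewise polynomial map is semialgebraic. A couple of points to be careful about if you were to write this up in full. First, the representation is not quite $g(m)=\Phi(a^m\Gamma)$ for a single group element $a$; in general one needs a polynomial sequence $a(m)\in G$ rather than a one-parameter flow, and the equidistribution theorem has to be the polynomial version. Second, the exceptional set $S$ arises not from a ``shrinking neighborhood'' construction but directly from the fact that the discontinuity locus of $\Phi$ on $Y$ has Haar measure zero, so the visits to it have density zero by equidistribution; your phrasing conflates two slightly different mechanisms. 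Third, the claim that $\overline{\Phi(Y)}$ is semialgebraic needs the observation that $Y$ itself is semialgebraic in Mal'cev coordinates and that $\Phi$ restricted to each piece is polynomial, so Tarski--Seidenberg applies piece by piece. None of these is a fatal gap, but each requires care in a full proof, and the first is a genuine correction to your outline.
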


\section{ A Class of Diophantine Correct Ordered
Rings}

The next theorem identifies a class of diophantine
correct ordered rings made from special sequences of polynomials.

\begin{theorem}
 For $i=1\ldots n$ let $g_i(t,x_1,\dots,x_{i-1})$
be  polynomials with algebraic coefficients. For each $n$-tuple of
algebraically independent real numbers $\bar{r}$ such that $0<r_i<1$, 
let $\ugh$ be the ring 
$$\bbz[t,g_1(t)-r_1,g_2(t,r_1)-r_2,\ldots g_n(t,r_1,\ldots,r_{n-1})-r_n]$$
Then
\begin{enumerate}
\item If the ring $\ugh$ extends to a model of open induction for one
algebraically independent $n$-tuple
$\bar{r}$ then it does so for all such  $n$-tuples
$\bar{r}$.
\item If the rings $\ugh$ extend to models of open induction, then
there is an open subset $S$ of the unit box $[0,1]^n$ such that for all
algebraically independent $\bar{r}\in S$, the ring $\ugh$ is diophantine
correct.
\end{enumerate}
\end{theorem}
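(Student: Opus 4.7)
The plan is to handle the two parts separately: Part (1) via an abstract-ring argument based on algebraic independence, and Part (2) by combining Proposition 3.4 with the Bergelson--Leibman theorem.

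For Part (1), I would first show that the abstract ring $\ugh$ is independent of the choice of algebraically independent $\bar r$. The kernel of the evaluation map $\bbz[T, Y_1, \ldots, Y_n] \twoheadrightarrow \ugh$ sending $T \mapsto t$, $Y_i \mapsto g_i(t, \bar r) - r_i$ consists of those $P$ for which every $t$-coefficient of $P(t, \bar g(t, \bar r) - \bar r)$ vanishes at $\bar r$. Each such coefficient is a polynomial in $\bar r$ with algebraic coefficients, and since $\overline{\bbq}(\bar r)/\overline{\bbq}$ is purely transcendental, vanishing at $\bar r$ is equivalent to formal vanishing. The same uniform-vanishing argument shows that whether an element has Puiseux-degree zero in $t$ (and is therefore bounded) is independent of $\bar r$; the value of such a bounded element is $c_P(\bar r)$, for a polynomial $c_P$ in $\bar r$ with algebraic coefficients. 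If $c_P(\bar r) \notin \bbz$---which happens whenever $c_P$ is non-constant (then $c_P(\bar r)$ is transcendental over $\overline{\bbq}$) or a non-integer algebraic constant---then $c_P(\bar r) - \lfloor c_P(\bar r)\rfloor$ is a nonzero element of $\ugh$ lying in $(0,1)$, breaking discreteness. Hence $\ugh$ is discretely ordered iff every such $c_P$ is an integer constant polynomial, a condition uniform in $\bar r$. Existence of homomorphisms $\ugh \to \bbz_p$ depends only on the abstract ring, so is also uniform, and Wilkie's extension theorem then gives Part (1).

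For Part (2), I would apply Proposition 3.4 to reduce diophantine correctness of $\ugh$ to the condition $\bar r \in \overline{\bar\gamma(\bbz)}$, where $\bar\gamma = (\gamma_1, \ldots, \gamma_n)$ is the associated tuple of bounded generalized polynomials; since the $g_i$ have algebraic coefficients, so do the $\gamma_i$. The Bergelson--Leibman theorem then produces a density-zero set $Z_0 \subseteq \bbz$ such that $C := \overline{\bar\gamma(\bbz \setminus Z_0)}$ is semialgebraic and definable without parameters. Setting $S := \operatorname{int}(C)$, which is open and contained in $[0,1]^n$, any algebraically independent $\bar r \in S$ satisfies $\bar r \in C \subseteq \overline{\bar\gamma(\bbz)}$, so $\ugh$ is diophantine correct by Proposition 3.4.

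The main obstacle is showing that $S$ is non-empty, so that the conclusion has content. The natural strategy is: if $\operatorname{int}(C) = \emptyset$, then $C$ lies in a proper $\bbz$-algebraic subvariety $\{P=0\}$ with $P\in\bbz[X_1,\ldots,X_n]$ nonzero, yielding the generalized polynomial identity $P(\bar\gamma(y_0)) = 0$ on a density-one set of integers; converting this identity back into a non-integer bounded element of $\ugh$---in the manner of the Section~3 example where $H(t,\sqrt 2 t - r, 2\sqrt 2 rt - s) = s - r^2$ accounts for $\{\sqrt 2 x\}^2 = \{2\sqrt 2 x\{\sqrt 2 x\}\}$---would contradict Part (1) under the OI hypothesis. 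I expect this conversion, which is essentially the converse of the mechanism displayed in Section~3, to be the delicate step, since the paper itself flags the general question as open.
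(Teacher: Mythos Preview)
Your Part~(1) is correct and essentially matches the paper's argument. For Part~(2), your reduction via Propositions~3.3--3.4 and Bergelson--Leibman to the nonemptiness of $\operatorname{int}(C)$ is exactly how the paper begins, and you have correctly located the crux. But you leave that crux unexecuted, and the mechanism you sketch is not enough as stated. In the Section~3 example the ``conversion'' succeeds because the auxiliary polynomial $H(y_0,y_1,y_2)=P\bigl(\sigma_1(y_0)-y_1,\sigma_2(y_0,y_1)-y_2\bigr)$ happens to land in $\bbz[\bar y]$; in general $H(\bar y)=P\bigl(\sigma_1(y_0)-y_1,\ldots,\sigma_n(\bar y)-y_n\bigr)$ has only \emph{algebraic} coefficients inherited from the $g_i$, so although $H$ evaluated at the generators of $R_{\bar r}$ equals the nonzero constant $P(\bar r)$, it is not visibly an element of $R_{\bar r}$. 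Evaluating $P$ itself at the generators does lie in $R_{\bar r}$ but is typically unbounded. So neither direct substitution produces the desired witness.

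The paper supplies three ingredients you are missing. The proof is by \emph{induction on $n$}: the inductive hypothesis yields an algebraically independent $\bar s\in(0,1)^{n-1}$ with $R_{\bar s}$ diophantine correct, so that $(\gamma_1(m),\ldots,\gamma_{n-1}(m))$ approaches $\bar s$ for suitable integers $m$. \emph{Van der Corput's theorem} then shows the set of such $m$ is syndetic, hence not contained in the density-zero exceptional set $B$; pick $m_i\notin B$ with this property. Finally a \emph{Zariski closure argument}: on the integer set $V=\{(m_i,\lfloor\sigma_1(m_i)\rfloor,\ldots,\lfloor\sigma_n(\cdots)\rfloor)\}$ the polynomial $H$ vanishes, and the algebraic independence of $\bar s$ forces no nonzero polynomial in $y_0,\ldots,y_{n-1}$ alone to vanish on any infinite subset of $V$, so an irreducible component of $\overline{V}$ is a hypersurface in $\bbc^{n+1}$. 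Being the closure of integer points it is defined over $\bbq$, and its generator $Q\in\bbq[\bar y]$ divides $H$. Evaluating the factorization $H=Q\cdot(H/Q)$ at the generators of $R_{\bar r}$ gives a product equal to $P(\bar r)$; hence $Q$ of the generators is finite, and after clearing denominators it is a bounded non-integer element of $R_{\bar r}$, contradicting discreteness. Incidentally, what the paper flags as open is whether \emph{every} generalized polynomial identity arises from an OI failure; the special case required here is actually proved.
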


\begin{proof}[Proof of {\rm(1)}]

Let $\bar{r}$ be an $n$-tuple of real numbers  with algebraically independent
coordinates. Since the ring $\ugh$ is generated by algebraically
independent polynomials, $\ugh$ will extend to a model of open
induction if and only if it is discretely ordered. (See the section on
Wilkie's theorems in the Introduction.) If $\ugh$ is not
discretely ordered, then there is an identity of polynomials in $t$ of the form
$$H(t,g_1(t)-r_1,\ldots,g_n(t,r_1\ldots r_{n-1})-r_n)=K(\bar{r}),$$
where $H$ and $K$ are polynomials and $H$ has integer coefficients. If such an
identity holds for one tuple $\bar{r}$ with algebraically independent
coordinates, then it holds for them all.
\end{proof}

\begin{proof}[Proof of  {\rm(2)}]

The case $n=1$ is done in Example 2.2. We show there that one can take $S$ to be
the interval $(0,1)$. 

Assume $n>1$, and assume that the rings $\ugh$ extend to models of
open induction. The proof will proceed by induction on $n$.

Copying Proposition 3.3, we construct
a sequence of polynomials $\sigma_i$ inductively as follows:  Let $\sigma_1(y_0)=f_1(y_0)$. 
 For $i>1$, let
$$\sigma_i(y_0\ldots y_{i-1})=g_i(y_0,\sigma_1(y_0)-y_1,\ldots,\sigma_{i-1}(y_0\ldots
y_{i-2})-y_{i-1}).$$
Then  the ring $\ugh$ is diophantine correct if and
only if for all positive $\epsilon$ the inequalities 
\begin{align*} &\left|\sigma_1(y_0)-y_1-r_1\right|<\epsilon\\
&\left|\sigma_2(y_0,y_1)-y_2-r_2\right|<\epsilon\\
&\quad\quad\ldots\ldots\tag{$*$}\\ &\left|\sigma_n(y_0,y_1\ldots
y_{n-1})-y_n-r_n\right|<\epsilon\end{align*}
have integer solutions $y_i$.
As in  Proposition 3.4, we define the generalized polynomials
$$\gamma_i(y_0)=\sigma_i(y_0\ldots y_{i-1})-y_i,$$
where $y_i$ is defined inductively by
$$y_i=\lfloor \sigma_i(y_0\ldots y_{i-1}) \rfloor.$$

Then for small enough $\epsilon$ the inequalities
$|\gamma_i(y_0)-r_i|<\epsilon$ hold for $y_0$ if and only if the inequalities
$(*)$ hold for $y_0$ and some choice of integers $y_1\ldots y_n$.

By Theorem 3.5, there is a subset $B$ of $\bbz$ of Folner density 0
 such that the closure of the points $\bar{\gamma}(x)$ for $x\not\in
 B$ is a semialgebraic set $C$ defined over $\bbq$.

If the cell decomposition of $C$ has an $n$-dimensional
cell,\footnote{See \cite{Vd} Chapter 3.} then $C$
contains an open subset of $[0,1]^{n}$ and the theorem is proved.
Otherwise, there is a non-zero polynomial $h$ with integer
coefficients such that 
$$h(\gamma_1(x)\ldots \gamma_{n}(x))=0$$ for all integers $x$ not in
$B$.\footnote{Semialgebraic sets of codimension at least one satisfy
nontrivial polynomial equations. \cite{Vd}}

Our goal is to prove that this is impossible, by showing that if such
an equation held, then $\ugh$ would not be discretely ordered.

By the induction hypothesis there is an open set $S\subseteq[0,1]^{n-1}$ such
that for all points $\bar{s}\in S$ with algebraically independent
coordinates, the rings $\ughs$ are diophantine correct.

 Fix a point $\bar{s}\in S$ with algebraically independent
coordinates. We shall need to know that there are integers $m\not\in B$
for which the point $(\gamma_1(m)\ldots \gamma_{n-1}(m))$ comes
arbitrarily close to $\bar{s}$.

Let $\epsilon>0$. Since $\ughs$ is diophantine correct, Proposition
3.3 Part (1) and Proposition 3.4 imply that there is an integer $m$
such that  
\begin{equation*}|(\gamma_1(m)\ldots \gamma_{n-1}(m))-\bar{s}|<\epsilon.\tag{$**$}
\end{equation*}
By Theorem 3.5 the solutions to ($**$) are syndetic. But no syndetic set has
Folner density zero.\footnote{ Let $s_i$ be the set of integers between $-i$ and $i$. Then  $s_i$ is a
Folner sequence. If $D$ is any syndetic set of integers, then choose $M$ so that $D$ meets every interval of length $M$. Then $\lim \inf_{i\to\infty}|D\cap s_i|/|s_i|$ will be at
least $1/M$, so $D$ cannot have density 0.} Therefore, for each  $\epsilon>0$
there is an  integer $m\not\in B$ satisfying ($**$).

Fix a sequence of integers $m_i\not\in B$ such
that the point $(\gamma_1(m_i)\ldots \gamma_{n-1}(m_i))$ tends to
$\bar{s}$.

Define $V\subseteq\bbz^{n+1}$ to be the set of all points 
$$\left(m_i,\lfloor g_1(m_i)\rfloor, \lfloor(
g_2(m_i,\gamma_1(m_i))\rfloor,\ldots \lfloor g_n(m_i,\gamma_1(m_i)\ldots\gamma_{n-1}(m_i))\rfloor\right)$$
for $i=1,2\ldots$. 

The equation $h(\gamma_1(m_i)\ldots\gamma_n(m_i))=0$ holds for all
$i$. Therefore, the  equation

$$h(\sigma_1(y_0)-y_1\ldots \sigma_n(y_0\ldots y_{n-1})-y_n)=0$$ holds
for all points $(y_0\ldots y_n)\in V$.  
Let $H(\bar{y})$ denote the polynomial on the left of the
above expression, so $H(\bar{y})$ has algebraic
coefficients and vanishes on $V$.

We claim that $H$ must have a non-constant factor with rational
coefficients. We shall prove this by arguing that the Zariski closure
of $V$ over the complex numbers includes a hypersurface in $\bbc^{n+1}$. The vanishing
ideal of that hypersurface will be principal, and defined over $\bbq$,
hence generated by a rational polynomial. That rational polynomial
will be a divisor of $H$.

To proceed, choose an infinite subset $V_0$ of $V$ such that the
Zariski closure $Z$ of $V_0$ is an irreducible component of the
Zariski closure of $V$.  We will show that $Z$ is a hypersurface in
$\bbc^{n+1}$ by arguing that no non-zero complex polynomial $k(y_0\ldots
y_{n-1})$ vanishes on $V_0$. 

Just suppose that $k(y_0\ldots y_{n-1})$ did vanish on $V_0$. Since
$V_0\subset\bbr^{n+1}$, we can assume that $k$ has real coefficients. Since
the coordinates of $\bar{s}$ are algebraically independent, it follows
that $k(t,g_1(t)-s_1,\ldots, g_{n-1}(t,s_1\ldots s_{n-2})-s_{n-1} )$ is
not the zero polynomial. Write 

 \begin{equation*}k(t,g_1(t)-s_1,\ldots, g_{n-1}(t,s_1\ldots s_{n-2})-s_{n-1}
)=\sum_{i=1}^Lk_i(\bar{s})t^i\end{equation*} with $k_L(\bar{s})\ne 0$. Choose a
neighborhood $U$ of $\bar{s}$ on which  $k_L(\bar{x})$ is bounded away
from zero. Then we can choose $M$ so large that for $t>M$ and for $\bar{x}\in U$,
it holds that \begin{equation*} k(t,g_1(t)-x_1,\ldots, g_{n-1}(t,x_1\ldots
x_{n-2})-x_{n-1} )\ne 0.\tag{$***$}\end{equation*}

 Now choose $i$ so that
\begin{enumerate}
\item $m_i>M$.
\item $(\gamma_1(m_i)\ldots\gamma_{n-1}(m_i))\in U.$
\item $\left(m_i,\lfloor
g_1(m_i)\rfloor, \lfloor(g_2(m_i,\gamma_1(m_i))\rfloor,\ldots \lfloor g_n(m_i,\gamma_1(m_i)\ldots\gamma_{n-1}(m_i))\rfloor\right)\in
V_0.$
\end{enumerate}
Substituting
$\gamma_1(m_i)\ldots\gamma_{n-1}(m_i)$ for $x_1\ldots x_{n-1}$ and
also 
$m_i$ for $t$ in ($***$)
 we obtain
$$ k(m_i,g_1(m_i)-\gamma_1(m_i),\ldots,g_{n-1}(m_i,\gamma_1(m_i),\ldots,
\gamma_{n-2}(m_i))-\gamma_{n-1}(m_i) )\ne 0.$$
Looking at the definition of the $\gamma_i$, we see
that the above inequation is equivalent to
$$k(m_i,\lfloor g_1(m_i)\rfloor,\lfloor( g_2(m_i,\gamma_1(m_i))\rfloor,\ldots,\lfloor
g_n(m_i,\gamma_1(m_i),\ldots,\gamma_{n-1}(m_i))\rfloor)\ne 0.$$
But this is a contradiction, because the point $$(m_i,\lfloor g_1(m_i)\rfloor,\lfloor( g_2(m_i,\gamma_1(m_i))\rfloor,\ldots,\lfloor
g_n(m_i,\gamma_1(m_i),\ldots,\gamma_{n-1}(m_i))\rfloor)$$ is an
element of $V_0$, hence $k$ vanishes at this point.
We conclude that $Z$, which is the Zariski closure of $V_0$, is a
hypersurface in $\bbc^{n+1}$.

The vanishing ideal $I\subseteq \bbc[\bar{y}]$ of $Z$ is
 principal.   Since $Z$ is the Zariski closure of a set of
 points with integer coordinates, $I$ has a generator $Q$ in
 $\bbq[\bar{y}]$. The polynomial $Q$ is the divisor of $H$ that we
 were after.

To complete the proof, suppose $H$ factors as $Q\cdot P$. Then the
coefficients of $P$ are real algebraic numbers, and we have the
following equality of polynomials:
$$Q(\bar{y})\cdot P(\bar{y})=h(\sigma_1(y_0)-y_1\ldots \sigma_n(y_0\ldots y_{n-1})-y_n)$$
Substituting $g_i(t,\bar{s})-r_i$ for $y_i$ ($i=1\dots n$) in the
last equation, we obtain $$A\cdot B= h(r_1\ldots r_n),$$ where

$$A=Q(t,g_1(t)-r_1,\ldots,g_{n}(t,r_1\ldots r_{n-1})-r_n)$$
and $$B=P(t,g_1(t)-r_1,\ldots,g_{n}(t,r_1\ldots r_{n-1})-r_n).$$
Working in the ordered ring $$\bbq[t,g_1(t)-r_1,g_2(t,r_1)-r_2,\ldots
g_n(t,r_1,\ldots,r_{n-1})-r_n],$$ we have that $A\cdot B$ is finite,
and neither  is infinitesimal, therefore both are
finite. But $A$ has the form $A_1/n$, where $A_1$ is a polynomial
with integer coefficients. Thus $A_1$ is a finite transcendental element of  $\ugh$. But then $\ugh$ is not
discretely ordered, contrary to our assumption that $\ugh$ extends to
a model of open induction. 
\end{proof}
\begin{remark}
Theorem 4.1 is almost certainly not giving the whole truth. We believe
that a ring $\ugh$ generated by a special sequence is diophantine
correct if and only it extends to a model of open induction, with no
restrictions on the tuple $\bar{r}$ beyond algebraic independence.  We
also believe that a theorem like Theorem 4.1 holds for the more
general sequences of Puiseux polynomials used to construct models of open
induction in \cite{Be}. To prove this, one must extend the results
of \cite{Bl} to an appropriate class of ``generalized'' semialgebraic
functions, that is, compositions of semialgebraic functions with the
integer part operator. 
\end{remark}

\end{document}